\newenvironment{ack}{\medskip{\it Acknowledgement.}}{}
\let\TeXchi\chi
\newbox\chibox
\chibox \hbox{\raise\dp0 \box 0 }
\def\chi{\copy\chibox}
\newtheorem{proposition}{Proposition}[section]
\newtheorem{theorem}{Theorem}[section]
\newtheorem{lemma}{Lemma}[section]
\newtheorem{corollary}{Corollary}[section]
\newtheorem{remark}{Remark}[section]
\numberwithin{equation}{section}
\numberwithin{theorem}{section}
\numberwithin{proposition}{section}
\numberwithin{lemma}{section}
\numberwithin{remark}{section}
\newcommand{\noi}{\noindent}
\newcommand{\dsty}{\displaystyle}
\newcommand{\txty}{\textstyle}
\newcommand{\al}{\alpha}
\newcommand{\gm}{\gamma}
\newcommand{\dl}{\delta}
\newcommand{\lm}{\lambda}
\newcommand{\varep}{\varepsilon}
\newcommand{\eps}{\epsilon}
\newcommand{\vp}{\varphi}
\newcommand{\sig}{\sigma}
\newcommand{\om}{\omega}
\newcommand{\Om}{\Omega}
\newcommand{\z}{\zeta}
\newcommand{\df}[1]{\buildrel\mbox{\small def}\over{#1}}
\newcommand{\nn}{\mathbb{N}}
\newcommand{\rr}{\mathbb{R}}
\newcommand{\rn}{\rr^N}
\newcommand{\mcl}[1]{\mathcal{#1}}
\newcommand{\bl}[1]{\mathbf{#1}}
\newcommand{\os}{\vbox{\hrule \hbox{\vrule 
height.6em depth0pt 
\hskip.6em \vrule height.6em depth0em}
\hrule}} 
\newcommand{\dvg}{\operatorname{div}}
\newcommand{\osc}{\operatornamewithlimits{osc}}
\newcommand{\loc}{\operatorname{loc}}
\newcommand{\pl}{\partial}
\newcommand{\intl}{\int\limits}
\def\Xint#1{\mathchoice
    {\XXint\displaystyle\textstyle{#1}}%
    {\XXint\textstyle\scriptstyle{#1}}%
    {\XXint\scriptstyle\scriptscriptstyle{#1}}%
    {\XXint\scriptscriptstyle\scriptscriptstyle{#1}}%
    \!\int}
\def\XXint#1#2#3{\setbox0=\hbox{$#1{#2#3}{\int}$}
    \vcenter{\hbox{$#2#3$}}\kern-0.5\wd0}
\def\bint{\Xint-}
\def\dashint{\Xint{\raise4pt\hbox to7pt{\hrulefill}}}
\def\dashiint{\bint\kern-0.15cm\bint}
\newcommand{\ovl}[3]{\int_{#1}^{#2}\kern-#3pt\raise4pt\hbox to7pt{\hrulefill}\ }
\newcommand{\ovll}[3]{\intl_{#1}^{#2}\kern-#3pt\raise4pt\hbox to7pt{\hrulefill}\ }
\newcommand{\tvl}[2]{\iint_{#1}\kern-#2pt\raise4pt\hbox to7pt{\hrulefill}\ }
\newcommand{\bye}{
\newcommand{\ine}{\int_{E}}
\newcommand{\iinet}{\iint_{E_T}}
\newcommand{\lpf}{\|f\|_{p,E}}
\newcommand{\lqf}{\|f\|_{q,E}}
\newcommand{\ovtau}{\bar{\tau}}
\newcommand{\ovep}{\bar{\eps}}
\newcommand{\ovdl}{\bar{\dl}}
\newcommand{\ovc}{\bar{c}}
\newcommand{\ovg}{\bar{\gm}}
\newcommand{\bry}{B_{\rho}(y)}
\newcommand{\kry}{K_{\rho}(y)}
\newcommand{\ple}{\pl E}
\newcommand{\ove}{\bar{E}}
\newcommand{\trpo}{\Big(\frac{c}{u\pto}\Big)^{p-2}}
\newcommand{\trmo}{\Big(\frac{c}{u\pto}\Big)^{m-1}}
\newcommand{\tpso}{\Big(\frac{u\pto}{c}\Big)^{2-p}}
\newcommand{\tmso}{\Big(\frac{u\pto}{c}\Big)^{1-m}}
\newcommand{\datap}{\{p,N,C_o,C_1\}}
\newcommand{\datam}{\{m,N,C_o,C_1\}}
\newcommand{\pto}{(x_o,t_o)}
\def\po{\left(x_o,t_o\right)}
\def\bx{\bar x}
\def\bt{\bar t}
\def\ox{\bar x}
\def\bxt{(\bx,\bt)}
\newcommand{\ukjp}{(u-k_j)_{+}}
\newcommand{\ukjm}{(u-k_j)_{-}}
\newcommand{\uhm}{(u-h)_{-}}
\newcommand{\umm}{(u-M)_{-}}
\newcommand{\ukpm}{(u-k)_{\pm}}
\newcommand{\ukp}{(u-k)_+}
\newcommand{\uknp}{(u-k_n)_+}
\newcommand{\uknpu}{(u-k_{n+1})_+}
\newcommand{\uknm}{(u-k_n)_-}
\newcommand{\ukm}{(u-k)_-}
\newcommand{\wkp}{(w-k)_+}
\newcommand{\wknp}{(w-k_n)_+}
\newcommand{\wknpu}{(w-k_{n+1})_+}
\newcommand{\umkmp}{(u^m-k^m)_+}
\newcommand{\umkmnp}{(u^m-k^m_n)_+}
\newcommand{\umkmnup}{(u^m-k^m_{n+1})_+}
\newcommand{\wkm}{(w-k)_-}
\newcommand{\vkjm}{(v-k_j)_-}
\newcommand{\rscc}{\frac{e^{\frac{\tau}{p-2}}}{M}
(\dl\rho^p)^{\frac1{p-2}} } 
\newcommand{\rsccs}{\frac{e^{\frac{\tau}{p-2}}}{M}
(\dl_*\rho^p)^{\frac1{p-2}} } 
\newcommand{\ptb}{(\bar{x},\bar{t})}
\newcommand{\vkp}{(v-k)_+}
\newcommand{\vklm}{(v-\bar{\lm}k)_-}
\newcommand{\psik}{\Psi(H_k^+,\ukp,c)}

\newcommand{\qrtpm}{Q_{\rho}^{\pm}(\theta)}
\newcommand{\qrtp}{Q_{\rho}^+(\theta)}
\newcommand{\qrtm}{Q_{\rho}^-(\theta)}
\newcommand{\qrttm}{Q_{2\rho}^-(\theta)}
\newcommand{\qrttp}{Q_{2\rho}^+(\theta)}
\newcommand{\tkn}{\tilde{K}_n}
\newcommand{\trn}{\tilde{\rho}_n}
\newcommand{\trsin}{\left(\frac{e^{\tau}}
{M^{2-p}\dl_1\rho^p}\right)^{\frac1{2-p}}}
\newcommand{\trspo}{\left(\frac{e^{\tau}}
{M^{2-p}\dl_1\rho^p}\right)^{\frac{p-1}{2-p}}}
\newcommand{\ppt}{\frac{\pl}{\pl\tau}}
\newcommand{\kwp}{(k-w)_+}
\newcommand{\kwpo}{[k-(k-w)_++\epsilon k]^{p-1}}
\newcommand{\ikfo}{\int_{K_{8}} }
\newcommand{\enw}{(\eps^n-w)_+ }
\newcommand{\efnp}{\frac{\eps^{n(2-p)}}{[1+\eps-s]^{p-1}}}
\newcommand{\iepnw}{\ikfo\z^p\tau_*\chi_{[\enw>s\eps^n]}dz}
\newcommand{\lngep}{\frac{\gm}{\gm_o}\bigg(
\ln{\frac{1+\eps}{1+\eps-s}}\bigg)^{-p}}
\newcommand{\ipsin}[1]{\ikfo\z^p{#1}
\Psi_{\eps^n}\big[w(z,{#1})\big]dz}
\newcommand{\iphin}[1]{\ikfo\z^p{#1}
\Phi_{\eps^n}\big[w(z,{#1})\big]dz}
\newcommand{\ukno}{(u-k_{n+1})_+}
\newcommand{\tvls}[2]{\iint_{#1}\kern-#2pt\raise4pt\hbox to15pt{\hrulefill}\ }
\newcommand{\uqo}{\bint_{K_\rho} u^q(\cdot,0)dx} 
\newcommand{\uqonrm}{\Big(\uqo\Big)^{\frac1q}} 
\newcommand{\uqoo}{\bint_{K_\rho(x_o)}u^q(\cdot,t_o)dx} 
\newcommand{\uqoonrm}{\Big(\uqoo\Big)^{\frac1q}} 
\newcommand{\uroo}{\bint_{K_{4\rho}(x_o)} 
u^r(\cdot,t_o-\theta\rho^p)dx} 
\newcommand{\uroom}{\bint_{K_{4\rho}(x_o)} 
u^r(\cdot,t_o-\theta\rho^2)dx} 
\newcommand{\uroonrm}{\Big(\uroo\Big)^{\frac1r}} 
\newcommand{\uroonrmm}{\Big(\uroom\Big)^{\frac1r}} 
\newcommand{\uro}{\bint_{K_{4\rho}} u^r(\cdot,-\theta\rho^p)dx} 
\newcommand{\urom}{\bint_{K_{4\rho}} u^r(\cdot,-\theta\rho^2)dx} 
\newcommand{\uronrm}{\Big(\uro\Big^{\frac1r}} 
\newcommand{\nrt}{|\|u|\|_{r,T-\varep}}
\newcommand{\ibtr}{\intl_0^t\!\intl_{K_{2\rho}}\kern-4pt}
\newcommand{\ibstr}{\intl_{K_{2\rho}} }
\newcommand{\izt}{\intl_0^t}
\newcommand{\Qq}{\mcl{Q}_q}

\begin{document}
\title{A Boundary Estimate for Degenerate Parabolic Diffusion Equations}
\author
{Ugo Gianazza\\
Dipartimento di Matematica ``F. Casorati", 
Universit\`a di Pavia\\ 
via Ferrata 1, 27100 Pavia, Italy\\
email: {\tt gianazza@imati.cnr.it}
\and
Naian Liao\thanks{Corresponding author}\\
College of Mathematics and Statistics\\
Chongqing University\\
Chongqing, China, 401331\\
email: {\tt liaon@cqu.edu.cn}}
\date{}
\maketitle
\vskip.4truecm
\begin{abstract}
We prove an estimate on the modulus of continuity at a boundary
point of a cylindrical domain for local weak solutions to degenerate
parabolic equations of $p$-laplacian type. The estimate is given in
terms of a Wiener-type integral, defined by a proper elliptic $p$-capacity.
\vskip.2truecm
\noindent{\bf AMS Subject Classification (2010):} 
Primary 35K65, 35B65; Secondary 35B45, 35K20
\vskip.2truecm
\noindent{\bf Key Words}: Degenerate parabolic $p$-laplacian, boundary estimates, continuity, elliptic $p$-capacity, Wiener-type integral.
\end{abstract}

\section{Introduction}\label{S:intro}
Let $E$ be an open set in $\rn$ and for $T>0$ let $E_T$ denote the
cylindrical domain $E\times(0,T]$. Moreover let
\begin{equation*}
S_T=\partial E\times(0,T],\qquad \partial_p E_T=S_T\cup(\bar{E}\times\{0\})
\end{equation*}
denote the lateral, and the parabolic boundary respectively.

We shall consider quasi-linear, parabolic partial differential equations of the form
\begin{equation}  \label{Eq:1:1}
u_t-\dvg\bl{A}(x,t,u, Du) = 0\quad \text{ weakly in }\> E_T,
\end{equation}
where the function $\bl{A}:E_T\times\rr^{N+1}\to\rn$ is only assumed to be
measurable and subject to the structure conditions
\begin{equation}  \label{Eq:1:2}
\left\{
\begin{array}{l}
\bl{A}(x,t,u,\xi)\cdot \xi\ge C_o|\xi|^p \\
|\bl{A}(x,t,u,\xi)|\le C_1|\xi|^{p-1}%
\end{array}%
\right .\quad \text{ a.e.}\> (x,t)\in E_T,\, \forall\,u\in\rr,\,\forall\xi\in\rn,
\end{equation}
where $C_o$ and $C_1$ are given positive constants, and $p>2$.

We refer to the parameters $\datap$ as our structural data, and we write $\gm%
=\gm(p,N,C_o,C_1)$ if $\gm$ can be quantitatively
determined a priori only in terms of the above quantities.
A function
\begin{equation}  \label{Eq:1:4}
u\in C\big(0,T;L^2_{\loc}(E)\big)\cap L^p_{\loc}\big(0,T; W^{1,p}_{%
\loc}(E)\big)
\end{equation}
is a local, weak sub(super)-solution to \eqref{Eq:1:1}--\eqref{Eq:1:2} if
for every compact set $K\subset E$ and every sub-interval $[t_1,t_2]\subset
(0,T]$
\begin{equation}  \label{Eq:1:5}
\int_K u\vp dx\bigg|_{t_1}^{t_2}+\int_{t_1}^{t_2}\int_K \big[-u\vp_t+\bl{A}%
(x,t,u,Du)\cdot D\vp\big]dxdt\le(\ge)0
\end{equation}
for all non-negative test functions
\begin{equation*}
\vp\in W^{1,2}_{\loc}\big(0,T;L^2(K)\big)\cap L^p_{\loc}\big(0,T;W_o^{1,p}(K)%
\big).
\end{equation*}
This guarantees that all the integrals in \eqref{Eq:1:5} are convergent.

For any $k\in\rr$, let
\[
(v-k)_-=\max\{-(v-k),0\},\qquad(v-k)_+=\max\{v-k,0\}.
\]
We require \eqref{Eq:1:1}--\eqref{Eq:1:2} to be parabolic, namely that whenever $u$ is a weak solution, for all $k\in\rr$, the functions $(u-k)_\pm$ are weak sub-solutions, with $\bl{A}(x,t,u,Du)$ replaced by $\pm\bl{A}(x,t,k\pm(u-k)_\pm,\pm D(u-k)_\pm)$. As discussed in condition ({\bf A}$_6$) of \cite[Chapter II]{dibe-sv} or Lemma~1.1 of \cite[Chapter 3]{DBGV-mono}, such a condition is satisfied, if for all {$(x,t,u)\in E_T\times\rr$} we have
\[
\bl{A}(x,t,u,\eta)\cdot\eta\ge0\qquad\forall\,\eta\in\rn,
\]
which is guaranteed by \eqref{Eq:1:2}.


For $y\in \rn$ and $\rho > 0 $, $K_{\rho}(y)$ denotes the cube of edge $2\rho$, centered at $y$
with faces parallel to the coordinate planes. When $y$ is the
origin of $\rn$, we simply write $K_{\rho}$.

We are interested in the boundary behaviour of solutions to the Cauchy-Dirichlet problem
\begin{equation}\label{Eq:1:6}
\left\{
\begin{aligned}
&u_t-\dvg\bl{A}(x,t,u, Du) = 0\quad \text{ weakly in }\> E_T\\
&u(\cdot,t)\Big|_{\partial E}=g(\cdot,t)\quad \text{ a.e. }\ t\in(0,T]\\
&u(\cdot,0)=g(x,0),
\end{aligned}
\right.
\end{equation}
where 
\begin{itemize}
\item {\bf (H1)}: $\bl{A}$ satisfies \eqref{Eq:1:2} for $p>2$, as already mentioned before;
\item {\bf (H2)}: $\dsty g\in L^p(0,T;W^{1,p}( E))$, and $g$ is
  continuous on $\overline{E}_T$ with modulus of continuity
  $\om_g(\cdot)$.
\end{itemize}
We do not impose any {\it a priori} requirements on the boundary of the domain
$E\subset\rn$.


A weak sub(super)-solution to
the Cauchy-Dirichlet problem \eqref{Eq:1:6} is a measurable function $u\in C\big(0,T;L^2(E)\big)\cap 
L^p\big(0,T; W^{1,p}(E)\big)$ satisfying
\begin{equation} \label{Eq:1:7}
\begin{aligned}
&\int_E u\vp(x,t) dx+\iint_{E_T} \big[-u\vp_t+\bl{A}%
(x,t,u,Du)\cdot D\vp\big]dxdt\\
&\le(\ge)\int_E g\vp(x,0) dx
\end{aligned}
\end{equation}
for all non--negative test functions
\begin{equation*}
\vp\in W^{1,2}\big(0,T;L^2(E)\big)\cap L^p\big(0,T;W_o^{1,p}(E)\big).
\end{equation*}
In addition, we take the boundary condition $u\le g$ ($u\ge g$) to
mean that $(u-g)_+(\cdot,t)\in W^{1,p}_o(E)$ ($(u-g)_-(\cdot,t)\in W^{1,p}_o(E)$) for
a.e. $t\in(0,T]$. A function $u$ which is both a weak sub-solution and
a weak super-solution, is a solution. Notice that the range we are
assuming for $p$, and the continuity of $g$ on the closure of $E_T$
ensure that a weak solution $u$ to \eqref{Eq:1:6} is bounded (see, for example, \cite[Chapter~V, Theorem~3.3]{dibe-sv}).

Let $\pto\in S_T$; the relative capacity of $E^c$ at $x_o$ is defined as 
\begin{equation}\label{Eq:delta}
\dl(\rho)\df=\frac{{\rm cap}_p(K_{\rho}(x_o)\backslash E,K_{\frac32\rho}(x_o))}{{\rm cap}_p(K_{\rho}(x_o),K_{\frac32\rho}(x_o))}.
\end{equation}
We refer to Section~\ref{Sec:prelim} for more details on the notion of capacity. 
In the sequel, we always assume $x_o$ is a {\it Wiener point of the domain $E$}, i.e.,
\begin{equation}\label{Eq:fat}
\int_0^1[\dl(s)]^{\frac{1}{p-1}}\frac{ds}{s}=\infty.
\end{equation}
Let $\gamma_*>1$ be the constant claimed in Lemma~\ref{Lm:3:3}; fix $R_o>0$ and $0<\eps<1$, such that 
 \begin{equation}\label{Eq:eps-ro}
 (t_o-3\gamma_*[\dl(R_o)]^{\frac{2-p}{p-1}}R_o^{p-\eps},t_o]\subset(0,T],
 \end{equation}
and set
\[
Q_{R_o}=K_{2R_o}(x_o)\times(t_o-3\gamma_*[\dl(R_o)]^{\frac{2-p}{p-1}}R_o^{p-\eps},t_o].
\]
Condition \eqref{Eq:eps-ro} can always be realized, since otherwise we would have for all $s\in(0,1)$ that
 \[
3\gamma_* [\dl(s)]^{\frac{2-p}{p-1}}s^{p-\eps}\ge t_o,
 \]
and consequently
 \[
\int_0^1[\dl(s)]^{\frac{1}{p-1}}\frac{ds}{s}\le \bigg(\frac{3\gamma_*}{t_o}\bigg)^{\frac{1}{p-2}}\int_{0}^1 s^{\frac{2-\eps}{p-2}}\,ds
=\bigg(\frac{3\gamma_*}{t_o}\bigg)^{\frac{1}{p-2}}\frac{p-2}{p-\eps}<\infty.
 \]
We can now state the main result of this work.
\begin{theorem}\label{Thm:1:1}
Let $u$ be a weak solution to \eqref{Eq:1:6}, assume that {\bf (H1)}--{\bf (H2)} and
\eqref{Eq:fat} are satisfied, choose $R_o$ and $\eps$ such that \eqref{Eq:eps-ro} holds true.
Then there exist positive constants $\gm\in(0,1)$, and $\bar\gm>0$ that depend only on the data $\datap$,
such that for any $\rho\in(0,R_o)$
\begin{equation}\label{Eq:decay}
\osc_{Q_{\rho}(\om_o)\cap E_T}\,u\le\om_o\exp\left\{-\gm\int_{\rho}^{R_o}[\dl(s)]^{\frac1{p-1}}\frac{ds}s\right\}+\osc_{{Q}_{R_o}\cap S_T}g+\bar\gm R_o^{\frac{\eps}{p-2}},
\end{equation}
where $\dl(s)$ is defined in \eqref{Eq:delta}, and
\[ 
\om_o\df=\osc_{Q_{R_o}}u,\qquad Q_{\rho}(\om_o)=K_{2\rho}(x_o)\times[t_o-\om_o^{2-p}\rho^p, t_o].
\]
\end{theorem}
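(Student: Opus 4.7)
\smallskip

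\noindent\textbf{Proof strategy.} The plan is to iterate the one-step boundary oscillation reduction provided by Lemma~\ref{Lm:3:3} along a geometric sequence of cylinders and to convert the resulting product into the exponential Wiener integral appearing in \eqref{Eq:decay}. Fix a small $\sigma\in(0,\tfrac14)$ (to be chosen in terms of the data), set $\rho_n=\sigma^n R_o$, and introduce the nested cylinders
\[
Q_n \,=\, K_{2\rho_n}(x_o)\times\bigl(t_o-3\gamma_*[\dl(\rho_n)]^{\frac{2-p}{p-1}}\rho_n^{p-\eps},\,t_o\bigr],
\]
which have the same shape as $Q_{R_o}$. Writing $\omega_n=\osc_{Q_n\cap E_T}u$, I expect Lemma~\ref{Lm:3:3} to yield at each level a reduction of the form
\[
\omega_{n+1}\le\bigl(1-\eta\,[\dl(\rho_n)]^{\frac{1}{p-1}}\bigr)\omega_n \,+\, \osc_{Q_n\cap S_T}g \,+\, c\,\rho_n^{\frac{\eps}{p-2}},
\]
with $\eta\in(0,1)$ depending only on the data. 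The extra boundary and error terms arise because the dichotomy in Lemma~\ref{Lm:3:3} requires that $\omega_n$ dominate both the oscillation of the boundary datum on $Q_n\cap S_T$ and a scale-dependent threshold coming from the intrinsic cylindrical geometry; if either fails, the corresponding term can be absorbed on the right-hand side.

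Iterating this recursion and using $1-x\le e^{-x}$ gives
\[
\omega_n\le \omega_o\exp\Bigl(-\eta\sum_{k=0}^{n-1}[\dl(\rho_k)]^{\frac{1}{p-1}}\Bigr)
+\osc_{Q_{R_o}\cap S_T}g + c\sum_{k=0}^{n-1}\rho_k^{\frac{\eps}{p-2}}.
\]
The geometric tail $\sum_k\rho_k^{\eps/(p-2)}$ is controlled by $\bar\gamma R_o^{\eps/(p-2)}$, while the discrete capacity sum is converted into $\int_{\rho_n}^{R_o}[\dl(s)]^{1/(p-1)}\,ds/s$ up to an absolute factor $\log(1/\sigma)$ by a standard dyadic comparison. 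Absorbing $\log(1/\sigma)$ into $\eta$ then yields the exponential in \eqref{Eq:decay} at every discrete scale $\rho_n$.

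To obtain the conclusion for arbitrary $\rho\in(0,R_o)$, pick $n$ with $\rho_{n+1}<\rho\le\rho_n$; it remains to verify $Q_\rho(\omega_o)\subset Q_n$. This reduces to comparing the intrinsic time-scale $\omega_o^{2-p}\rho^p$ with the capacitary one $[\dl(\rho_n)]^{(2-p)/(p-1)}\rho_n^{p-\eps}$. Either $\omega_o\le cR_o^{\eps/(p-2)}$, in which case the conclusion already follows from the last term of \eqref{Eq:decay}, or the desired inclusion holds thanks to condition \eqref{Eq:eps-ro} and the choice of $\eps$.

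The main obstacle is reconciling the intrinsic, $\omega_o$-dependent time scaling demanded by the degeneracy $p>2$ with the capacitary time scaling dictated by Lemma~\ref{Lm:3:3}: we need the intrinsic cylinder to fit inside the capacitary one at every level of the iteration, uniformly in $n$. The $R_o^\eps$ slack built into \eqref{Eq:eps-ro} is precisely the buffer that makes this possible while leaving behind only the summable geometric error $\bar\gamma R_o^{\eps/(p-2)}$.
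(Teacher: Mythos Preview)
Your overall plan---iterate Lemma~\ref{Lm:3:3}, convert products to exponentials, compare discrete sums to the Wiener integral---matches the paper. But there is a genuine gap at the most delicate point: you assert that the cylinders
\[
Q_n = K_{2\rho_n}(x_o)\times\bigl(t_o-3\gamma_*[\dl(\rho_n)]^{\frac{2-p}{p-1}}\rho_n^{p-\eps},\,t_o\bigr]
\]
are nested, and this is simply false in general. Nesting in time requires
\[
[\dl(\rho_{n+1})]^{\frac{2-p}{p-1}}\rho_{n+1}^{p-\eps}\le[\dl(\rho_{n})]^{\frac{2-p}{p-1}}\rho_{n}^{p-\eps},
\]
which, since $\tfrac{2-p}{p-1}<0$, is equivalent to $\dl(\rho_{n+1})/\dl(\rho_n)\ge\sigma^{(p-\eps)(p-1)/(p-2)}$. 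Nothing in the hypotheses prevents $\dl(\rho)$ from dropping by an arbitrary factor from one scale to the next; the Wiener condition \eqref{Eq:fat} is a statement about an integral, not a pointwise lower bound. The $\rho^{-\eps}$ slack in \eqref{Eq:eps-ro} cannot absorb this, because the required ratio bound is independent of $\eps$.

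The paper faces exactly the same difficulty and resolves it with Lemma~\ref{Lm:4:1}: from the full geometric sequence $\rho_i=\bar c^{\,i}R_o$ one extracts, using the divergence of $\sum_i A(\rho_i)$, a \emph{subsequence} $\{\rho_{i_j}\}$ along which $A(\rho_{i_{j+1}})/A(\rho_{i_j})>2^{-(i_{j+1}-i_j)}$. This is precisely what forces the nesting condition \eqref{Eq:bar-c}, and the second part of the lemma, estimate \eqref{Eq:sub-bd}, guarantees that summing $A$ only along the subsequence still controls (up to a factor $2$) the full sum, hence the Wiener integral. Note also that the paper iterates not on your $Q_n$ but on the intrinsic cylinders $K_{2\rho_{i_{j-1}}}(x_o)\times(t_o-\gamma_*\bar\theta_{i_{j-1}}\rho_{i_{j-1}}^p,t_o]$ with $\bar\theta_{i_{j-1}}=(\mu_{i_{j-1}}A(\rho_{i_{j-1}}))^{2-p}$; the $\rho^{-\eps}$ buffer is used only once, at the initial step \eqref{Eq:in-req}, to start the induction. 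Without the subsequence selection your iteration cannot be made to close.
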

By the same argument of proving \eqref{Eq:eps-ro}, one easily obtains that 
there is a sequence of positive numbers $\{R_n\}$ converging to zero, such that
\[
3\gamma_*[\dl(R_n)]^{\frac{2-p}{p-1}}R_n^{p-\eps}\to 0\quad\text{ as }n\to\infty.
\]
Therefore, from Theorem~\ref{Thm:1:1} we can conclude
  the following corollary in a standard way.
\begin{corollary}\label{Cor:1:1}
Let $u$ be a weak solution to \eqref{Eq:1:6}, assume that {\bf (H1)}--{\bf (H2)} hold true,  
that $(x_o,t_o)\in S_T$, and that $x_o$ is a Wiener point of the domain $E$. Then 
\[
\lim_{\genfrac{}{}{0pt}{}{(x,t)\to(x_o,t_o)}{(x,t)\in E_T}}u(x,t)=g(x_o,t_o).
\]
\end{corollary}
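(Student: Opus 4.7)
The plan is to apply Theorem~\ref{Thm:1:1} along the subsequence $\{R_n\}$ produced just before the corollary statement, and to let $\rho\to 0$; the three terms on the right-hand side of \eqref{Eq:decay} are then controlled respectively by the Wiener condition \eqref{Eq:fat}, by the continuity of $g$ at $(x_o,t_o)$, and by the explicit positive power of $R_o$.

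Fix $\eta>0$. Since $R_n\to 0$ with $3\gamma_*[\dl(R_n)]^{(2-p)/(p-1)}R_n^{p-\eps}\to 0$, the parabolic cylinders $Q_{R_n}$ collapse to $\{(x_o,t_o)\}$. Thus for $n$ large enough we may take $R_o=R_n$ so that \eqref{Eq:eps-ro} holds and simultaneously
\[
\bar\gm R_o^{\eps/(p-2)}<\tfrac{\eta}{3},\qquad \osc_{Q_{R_o}\cap S_T}g<\tfrac{\eta}{3},
\]
the second inequality following from the continuity of $g$ at $(x_o,t_o)$ and the shrinkage of $Q_{R_n}$. Because $u$ is bounded on $\overline{E}_T$ (as noted after \eqref{Eq:1:7}), $\om_o=\osc_{Q_{R_o}}u\le 2\|u\|_{\infty}<\infty$. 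The Wiener condition \eqref{Eq:fat} makes $\int_\rho^{R_o}[\dl(s)]^{1/(p-1)}ds/s\to\infty$ as $\rho\to 0^+$, so one may choose $\rho\in(0,R_o)$ with
\[
\om_o\exp\!\Big\{-\gm\!\int_\rho^{R_o}[\dl(s)]^{1/(p-1)}\tfrac{ds}{s}\Big\}<\tfrac{\eta}{3}.
\]
Substituting these three estimates into \eqref{Eq:decay} gives $\osc_{Q_\rho(\om_o)\cap E_T}u<\eta$.

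It remains to convert this oscillation bound into the pointwise limit $u\to g(x_o,t_o)$ at $(x_o,t_o)$. The Dirichlet condition $(u-g)_\pm(\cdot,t)\in W^{1,p}_o(E)$ identifies $u$ with $g$ on $S_T$ in the trace sense, and the continuity of $g$ places every value $g(y,s)$ with $(y,s)\in Q_{R_o}\cap S_T$ within $\eta/3$ of $g(x_o,t_o)$. Combining this boundary information with the $\eta$-oscillation bound on the interior set $Q_\rho(\om_o)\cap E_T$ gives $|u(x,t)-g(x_o,t_o)|\le 2\eta$ throughout $Q_\rho(\om_o)\cap E_T$; since $\eta>0$ is arbitrary, the stated limit follows. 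The main delicate step is this last passage from an essential-oscillation estimate inside $E_T$ to a pointwise boundary limit, but it is standard given the continuity of $g$ and the trace interpretation of the Dirichlet data.
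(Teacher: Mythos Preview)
Your first four steps are correct and spell out exactly what the paper means by ``in a standard way'': pick $R_o=R_n$ along the subsequence so that the last two terms of \eqref{Eq:decay} are small, then use \eqref{Eq:fat} to kill the exponential term by sending $\rho\to0$. This matches the paper's argument.

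The gap is in your last paragraph. You argue that the trace identity $(u-g)_\pm(\cdot,t)\in W^{1,p}_o(E)$ together with the continuity of $g$ and the interior oscillation bound forces $|u-g(x_o,t_o)|\le 2\eta$ on $Q_\rho(\om_o)\cap E_T$. But this step is circular: passing from the trace information on $S_T$ to pointwise control of $u$ at interior points approaching $(x_o,t_o)$ is precisely the boundary continuity you are trying to prove. An oscillation bound on $Q_\rho(\om_o)\cap E_T$ only tells you that the limit $L=\lim_{(x,t)\to(x_o,t_o)}u(x,t)$ \emph{exists}; it does not by itself identify $L$ with $g(x_o,t_o)$, and the trace condition alone, without a regularity statement at $(x_o,t_o)$, cannot close the loop.

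The fix the paper has implicitly available is to use not the oscillation conclusion of Theorem~\ref{Thm:1:1} but the two one--sided estimates \eqref{Eq:final2}--\eqref{Eq:final3} obtained in its proof. With $k=\sup_{Q_{R_o}\cap S_T}g$ and $h=\inf_{Q_{R_o}\cap S_T}g$, those estimates give, on $Q_\rho(\om_o)\cap E_T$,
\[
h-o(1)\le u\le k+o(1)
\]
as $\rho\to0$ and then $R_o=R_n\to0$. Since $h,k\to g(x_o,t_o)$ by continuity of $g$, the limit $u\to g(x_o,t_o)$ follows directly, with no appeal to the trace. This is presumably the ``standard'' argument the paper has in mind.
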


As already remarked in \cite{GLL}, Theorem \ref{Thm:1:1} also implies H\"older regularity up to the
boundary under a fairly weak assumption on the domain. More
specifically, a set $A\subset\rn$ is {\it uniformly $p$-fat}, if for some
$\gm_o,\,\rho_o>0$ one has
\[
\frac{{\rm cap}_p(K_{\rho}(x_o)\cap A,K_{\frac32\rho}(x_o))}{{\rm
    cap}_p(K_{\rho}(x_o),K_{\frac32\rho}(x_o))}\geq \gm_o
\]
for all $0<\rho <\rho_o$ and all $x_o\in A$. See \cite{lewis1988} for
more on this notion. We have the following corollary. 

\begin{corollary}\label{cor:holder}
  Let $u$ be a weak solution to \eqref{Eq:1:6}, assume that {\bf (H1)}--{\bf (H2)} hold true, the complement of the domain $E$ is
  uniformly $p$-fat, and let $g$ be H\"older continuous.  Then the
  solution $u$ is H\"older continuous up to the boundary.
\end{corollary}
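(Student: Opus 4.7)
The strategy is to apply Theorem~\ref{Thm:1:1} at each boundary point, exploit the uniform $p$-fatness to reduce the Wiener integral to a logarithm, and then balance the three terms on the right-hand side of \eqref{Eq:decay} by choosing $R_o$ as an appropriate small power of $\rho$.

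Fix $(x_o,t_o)\in S_T$. The uniform $p$-fatness of $E^c$ supplies constants $\gamma_o,\rho_o>0$, independent of $x_o$, with $\dl(s)\ge\gamma_o$ for all $s\in(0,\rho_o)$. Consequently, for $0<\rho<R_o<\rho_o$,
\[
\int_{\rho}^{R_o}[\dl(s)]^{\frac1{p-1}}\frac{ds}s\;\ge\;\gamma_o^{\frac1{p-1}}\log\frac{R_o}{\rho},
\]
so the first summand of \eqref{Eq:decay} is controlled by $\omega_o(\rho/R_o)^{\alpha}$ with $\alpha=\gm\,\gamma_o^{1/(p-1)}>0$. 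Since $u$ is bounded, $\omega_o\le 2\|u\|_\infty$ is a harmless constant.

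Next, I would set $R_o=\rho^{\sigma}$ for a fixed $\sigma\in(0,1)$; the admissibility condition \eqref{Eq:eps-ro} is automatically satisfied for all sufficiently small $\rho$, because $\dl(R_o)\ge\gamma_o$ makes the required time length comparable to $R_o^{p-\eps}\to0$. Writing $\beta$ for the H\"older exponent of $g$, the three terms of \eqref{Eq:decay} become respectively of order
\[
\rho^{\alpha(1-\sigma)},\qquad \rho^{\sigma\beta'},\qquad \rho^{\sigma\eps/(p-2)},
\]
where $\beta'>0$ accounts for the parabolic scaling of $g$'s modulus of continuity on $Q_{R_o}\cap S_T$. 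Any choice $\sigma\in(0,1)$ keeps all three exponents strictly positive, so
\[
\osc_{Q_\rho(\omega_o)\cap E_T}u\;\le\;C\rho^{\gamma'}
\]
for some $\gamma'>0$ depending only on the data, $\gamma_o$, $\beta$, and $\eps$. Since $\omega_o$ is bounded, $Q_\rho(\omega_o)$ contains a standard parabolic cylinder $K_{2\rho}(x_o)\times[t_o-c\rho^p,t_o]$, so we obtain a parabolic H\"older modulus at $(x_o,t_o)$ with an exponent uniform across $S_T$.

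To upgrade this pointwise boundary estimate to H\"older continuity up to $S_T$, I would combine it with the classical interior H\"older continuity for local weak solutions of \eqref{Eq:1:1}--\eqref{Eq:1:2} (see, e.g., \cite{DBGV-mono}): at a point well inside $E_T$ interior regularity applies directly, while for points close to $S_T$ one transfers the oscillation bound from a nearest boundary point by the triangle inequality, comparing the two moduli on a cylinder whose size is proportional to the distance from $S_T$. The only genuinely new step is the $p$-fatness reduction; the remainder is exponent bookkeeping plus a routine interior/boundary patching. I expect the main delicate point to be matching the parabolic scalings of $g$'s modulus, of the intrinsic cylinder $Q_\rho(\omega_o)$, and of the standard parabolic metric, but the uniform boundedness of $\omega_o$ renders all of them equivalent up to constants, so this is not a serious obstacle.
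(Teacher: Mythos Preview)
Your proposal is correct and follows precisely the route the paper intends: the paper does not spell out a proof of this corollary but states it as a direct consequence of Theorem~\ref{Thm:1:1} (referring to \cite{GLL}), and Remark~\ref{Rmk:1:1} makes explicit that once $\dl(s)\ge\gamma_o$ the exponential in \eqref{Eq:decay} becomes $(\rho/R_o)^\alpha$, which is exactly your first step. Your handling of the remaining two terms by the scaling $R_o=\rho^\sigma$, and the subsequent interior/boundary patching, are standard and correct; the observation that $\omega_o\le 2\|u\|_\infty$ forces $Q_\rho(\omega_o)$ to contain a fixed standard parabolic cylinder is the right way to pass from the intrinsic to the usual geometry.
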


\begin{remark}\label{Rmk:1:1}
{\normalfont When $p>N$, then for any $s\in(0,1)$ we always have $\dl(s)\ge\gm_o$ for some $\gm_o\in(0,1)$ depending only
on $N$ and $p$, as explained in Section~\ref{Sec:prelim}.
In such a case, if $g$ is assumed to be H\"older continuous, then 
Corollary~\ref{cor:holder} is automatically satisfied, and
\begin{equation}\label{Eq:hol-decay}
\osc_{Q_{\rho}(\om_o)\cap E_T}\,u\le\om_o \left(\frac{\rho}{ R_o}\right)^{\al},
\end{equation}
where $\al\in(0,1)$ depends only on the data $\datap$.
}
\end{remark}
\subsection{Novelty and Significance}
The continuity at the boundary of rough sets for solutions to elliptic partial differential equations of $p$-laplacian type is by now basically a settled matter (see, for example, \cite{Ma-Zie}). In the parabolic setting the theory is more fragmented, and still to be fully developed. 

Continuity at the boundary for quite general operators with a growth of order $p=2$ has been considered in \cite{ziemer, ziemer1982}. When dealing with a general $p>1$, the fact that a Wiener point is a continuity point has already been observed in \cite{BBGP} (see also \cite{KiLi96}). However, only the prototype parabolic $p$-laplacian is dealt with, and no explicit decay estimate as in \eqref{Eq:decay} is provided.

The so-called super-critical singular range, that is when $\frac{2N}{N+1}<p<2$, has been considered in \cite{skrypnik-2004} based on the comparison principle, 
and then, more recently in \cite {GLL}, with different techniques, which are closely related to the method we use here. Coming to the degenerate range $p>2$, a result similar to ours is stated in \cite{skrypnik-2000}. In such a paper, 
the comparison principle once more plays a fundamental role; this is not the case here, where no use whatsoever of the comparison principle is made, and purely structural estimates are proved.
Moreover, we give an explicit modulus of continuity, and
therefore, Theorem~\ref{Thm:1:1} represents a step forward. 

Here we also point out a difference between the singular case and the degenerate case,
when proving the reduction of oscillation along a family of nested, intrinsically scaled cylinders.
In the singular case, we do not require {\it a priori} that the Wiener integral \eqref{Eq:fat} diverges.
However, in the degenerate case, we need to use the divergence of the Wiener integral in order to
fit the cylinders in one another, due to the role played by $\dl(\rho)$
in the time scaling (see Lemma~\ref{Lm:4:1}).

As already remarked in \cite{GLL} for an analogous result, Corollary~\ref{cor:holder} can be seen as an extension of Theorem~1.2 of \cite[Chapter III]{dibe-sv}, where the H\"older continuity up to the boundary of weak solutions to the Cauchy-Dirichlet problem \eqref{Eq:1:6} with H\"older continuous boundary data is proved, assuming that the domain $E$ satisfies a positive geometric density condition. It is a matter of straightforward computations to see that if a domain $E$ has positive geometric
density, then the complement of $E$ is uniformly $p$-fat, but the
opposite implication obviously does not hold.

As pointed out in  Remark~\ref{Rmk:1:1}, when $p>N$, and the boundary datum is H\"older continuous, the solution is also H\"older continuous, regardless of the geometry of the domain $E$. This is obvious for the elliptic $p$-laplacian due to the Sobolev embedding, but the parabolic case seems new.

Finally, all the estimates are stable as $p\to2+$, and therefore, the continuity result of Corollary~\ref{Cor:1:1}
recovers the analogous one given in  \cite{ziemer}.

As for the structure of the paper, the proof of Theorem~\ref{Thm:1:1} is given in Section~\ref{S:final}, whereas the previous sections are devoted to introductory material, namely some preliminary results (Section~\ref{Sec:prelim}), and a couple of auxiliary lemmas (Section~3).

\begin{ack} 
{\normalfont Part of this paper was written, when Ugo Gianazza visited the College of Mathematics and Statistics of
 Chongqing University, and it was finished during the  {\it Workshop on Nonlinear Parabolic PDEs} at Institut Mittag-Leffler, June 2018. The warm hospitality of both institutions is gratefully acknowledged. 
 The authors are indebted to
 Emmanuele DiBene\-det\-to and Vincenzo Vespri for their comments, 
 which greatly helped to improve the final version of this manuscript.}
\end{ack}
\section{Preliminaries}\label{Sec:prelim}
The first basic fact is taken from \cite[Lemma 2.2]{kuusi2008} (see also \cite[Lemma 10.1 on page 116]{DBGV-mono}).
\begin{lemma}\label{Lm:2:1}
Let $u$ be a non-negative, local, weak super-solution to the degenerate
equation \eqref{Eq:1:1}--\eqref{Eq:1:2} in the cylinder
\[
K\times(t_1,t_2)
\]
where $K$ is a cube in $\rr^N$. Then for all $\varep\in(-1,0)$,
\begin{equation}
\begin{aligned}
\frac{p}{C_o(1+\varep)|\varep|}&\sup_{t_1<t<t_2}\int_{K}u^{1+\varep}\vp^p(x,t)\,dx
+\int_{t_1}^{t_2}\int_K|Du|^pu^{\varep-1}\vp^p\,dxdt\\
&\le \bigg(\frac{C_1p}{C_o|\varep|}\bigg)^p\int_{t_1}^{t_2}\int_K u^{\varep+p-1}|D\vp|^p\,dxdt\\
&\quad+\frac{p}{C_o(1+\varep)|\varep|}\int_{t_1}^{t_2}\int_K u^{1+\varep}
\bigg(\frac{\pl \vp^p}{\pl t}\bigg)_+\,dxdt\\
&\quad+\frac{p}{C_o(1+\varep)|\varep|}\int_{K}u^{1+\varep}\vp^p(x, t_2)\,dx
\end{aligned}
\end{equation}
for every non-negative test function
\[
\vp\in W^{1,2}(t_1,t_2;L^2(K))\cap L^p(t_1,t_2;W^{1,p}_o(K)).
\]
\end{lemma}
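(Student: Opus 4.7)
The approach is a Caccioppoli-type energy estimate, obtained by testing the weak super-solution formulation against the non-negative function $\psi := \varphi^p u_\eta^{\varepsilon}$, where $u_\eta := u + \eta$ with $\eta > 0$. The additive regularization is needed because $\varepsilon < 0$ makes $u^{\varepsilon}$ singular where $u$ vanishes, and the hypothesis $1 + \varepsilon > 0$ makes the eventual passage $\eta \to 0^+$ routine by monotone convergence. Since $u \in C(0,T;L^2)\cap L^p(0,T;W^{1,p})$ is not a priori differentiable in time, one first Steklov-averages in time: the super-solution inequality then reads, pointwise in $t$, as $\int_K \partial_t u_h \cdot \psi + \int_K [\mathbf{A}]_h \cdot D\psi \geq 0$ for every non-negative admissible $\psi$, and at the end of the argument one lets $h \to 0$ by the standard Steklov convergences.

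Plug in $\psi = \varphi^p (u_h+\eta)^{\varepsilon}$ and use the chain-rule identity $\partial_t u_h \cdot (u_h+\eta)^{\varepsilon} = \tfrac{1}{1+\varepsilon}\,\partial_t(u_h + \eta)^{1+\varepsilon}$. One integration by parts in time over $(\tau, t_2) \subset (t_1, t_2)$ converts the parabolic contribution into
\[
\tfrac{1}{1+\varepsilon}\int_K u_\eta^{1+\varepsilon}\varphi^p \,\Big|_\tau^{t_2} \;-\; \tfrac{1}{1+\varepsilon}\int_\tau^{t_2}\!\!\int_K u_\eta^{1+\varepsilon}\,\partial_t\varphi^p.
\]
For the spatial term $\mathbf{A}\cdot D\psi = \varepsilon u_\eta^{\varepsilon-1}\varphi^p\,\mathbf{A}\cdot Du + p u_\eta^{\varepsilon}\varphi^{p-1}\,\mathbf{A}\cdot D\varphi$, the first piece contributes the coercive \emph{good} term $-C_o|\varepsilon|\int u_\eta^{\varepsilon-1}\varphi^p |Du|^p$ (using $\varepsilon = -|\varepsilon|$ and $\mathbf{A}\cdot Du \geq C_o|Du|^p \geq 0$), while the second is controlled by $pC_1 \int u_\eta^{\varepsilon}\varphi^{p-1}|Du|^{p-1}|D\varphi|$ via $|\mathbf{A}|\leq C_1|Du|^{p-1}$.

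The core analytic step is the absorption of the cross term by Young's inequality applied to the splitting
\[
u_\eta^{\varepsilon}\varphi^{p-1}|Du|^{p-1}|D\varphi| = \bigl(u_\eta^{\varepsilon-1}\varphi^p |Du|^p\bigr)^{(p-1)/p} \cdot \bigl(u_\eta^{\varepsilon+p-1}|D\varphi|^p\bigr)^{1/p},
\]
choosing the Young weight so that the first factor is absorbed into $\tfrac{p-1}{p}$ of the coercive $|Du|^p$-term, leaving $\tfrac{C_o|\varepsilon|}{p}\int u_\eta^{\varepsilon-1}\varphi^p |Du|^p$ on the left and producing the constant $(C_1 p /(C_o|\varepsilon|))^p$ in front of $\int u_\eta^{\varepsilon+p-1}|D\varphi|^p$ on the right.

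Putting everything together, placing the $\tau$-boundary term and the surviving coercive term on the left, bounding the $\partial_t \varphi^p$-term by its positive part $(\partial_t\varphi^p)_+$ as in the statement, and multiplying through by $p/(C_o|\varepsilon|)$ converts the prefactor $1/(1+\varepsilon)$ into $p/(C_o(1+\varepsilon)|\varepsilon|)$ on the boundary and time-derivative terms while normalizing the $|Du|^p$-term to coefficient $1$. Since the right-hand side is independent of $\tau$ once the time integration is enlarged to $(t_1, t_2)$, the supremum over $\tau \in (t_1, t_2)$ may be taken on the left. Finally, let $h \to 0$ (standard Steklov-average convergence, strong for $u_h$ and weak for $Du_h$ against $\mathbf{A}$) and then $\eta \to 0^+$ by monotone convergence, using $1+\varepsilon > 0$ and $\varepsilon + p - 1 > 0$ to keep the integrands well-behaved. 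The main obstacle is the bookkeeping of signs and constants: tracking the factors $1+\varepsilon$, $|\varepsilon|$, and $p$ faithfully through the Young absorption and the final rescaling so that the multiplicative constants match those stated exactly; the overall scheme is otherwise routine Caccioppoli-type algebra.
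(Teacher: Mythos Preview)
Your proposal is correct and follows precisely the approach sketched in the paper's own proof: test the weak formulation against $(u+\nu)^{\varepsilon}\vp^p$ (your $\eta$ is the paper's $\nu$), carry out the routine Caccioppoli computation with the Steklov regularization and Young absorption you describe, and let $\nu\to0$. You have simply spelled out the ``routine calculation'' that the paper leaves implicit.
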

\begin{proof}
Take the test function $(u+\nu)^{\varep}\vp^p$ in the weak formulation \eqref{Eq:1:4}
where $\nu$ is a positive constant. Then a routine calculation followed by
letting $\nu\to 0$ yields the conclusion.
\end{proof}
With the above lemma at disposal, we are able to show the following reverse 
H\"older's inequality. This is done by carefully tracing the dependence in the proof
of \cite[Lemma 5.3]{kuusi2008} or \cite[Lemma 11.1]{DBGV-mono}.
\begin{lemma}\label{Lm:2:2}
Let $v$ be a non-negative, local, weak super-solution to the degenerate
equation \eqref{Eq:1:1}--\eqref{Eq:1:2} in the cylinder
\[
K_{2\rho}(x_o)\times(t_o-\theta\rho^p, t_o),
\]
with $\theta>0$ to be determined later. For any $\sig\in(0,1)$, and for any $\eta\in(0,1)$, there exists a constant $C_\eta>1$ depending only
on the data $\datap$, $\sig$, and $\eta$, such that
\[
\begin{aligned}
\dashint_{t_o-\theta\rho^p}^{t_o}&\dashint_{K_{\rho}(x_o)}v^{p-2+\sig(1+\frac{p}{N})}\,dxdt\\
\le& C_\eta\bigg[\sup_{t_o-\theta\rho^p<t<t_o}\dashint_{K_{2\rho}(x_o)}v(x,t)\,dx\bigg]^{p-2+\sig(1+\frac{p}{N})}+\eta\left(\frac1\theta\right)^{1+\frac\sig{p-2}\left(1+\frac p N\right)}.
\end{aligned}
\]
\end{lemma}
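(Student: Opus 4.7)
The plan is to couple the weighted Caccioppoli estimate of Lemma~\ref{Lm:2:1} with the parabolic Sobolev embedding, and then close the resulting chain of inequalities by a Young absorption. The natural parameter choice is $\varep:=\sig-1\in(-1,0)$, which makes Lemma~\ref{Lm:2:1} control $\sup_t\int v^{\sig}\vp^p\,dx$ and $\iint |Dv|^p v^{\sig-2}\vp^p\,dxdt$ in terms of three pieces on the right: the spatial cutoff contribution $\rho^{-p}\iint v^{\sig+p-2}$, the time cutoff contribution $(\theta\rho^p)^{-1}\iint v^{\sig}$, and the terminal boundary integral $\int_{K_{2\rho}} v^\sig(x,t_o)\,dx$. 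For the test function $\vp=\zeta(x)\chi(t)$ I take $\zeta\in C_c^\infty(K_{2\rho}(x_o))$ with $\zeta\equiv 1$ on $K_\rho(x_o)$ and $|D\zeta|\le 2/\rho$, while $\chi$ smoothly rises from $0$ at $t=t_o-\theta\rho^p$ to $1$ on the upper half of the time interval, with $|\chi'|\lesssim 1/(\theta\rho^p)$.

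Next I apply the parabolic Sobolev embedding to $w:=v^{(\sig+p-2)/p}\zeta$, for which $|Dw|^p\lesssim v^{\sig-2}|Dv|^p\zeta^p+v^{\sig+p-2}|D\zeta|^p$. Choosing $q:=\sig p/(\sig+p-2)$, one has $w^q=v^\sig\zeta^q$ and the elementary identity $(\sig+p-2)(1+q/N)=p-2+\sig(1+p/N)=:q_*$. The embedding
\[
\iint|w|^{p(1+q/N)}\,dxdt\le C\Bigl(\sup_t\int|w|^q\,dx\Bigr)^{p/N}\iint|Dw|^p\,dxdt
\]
thus produces a bound on $\iint v^{q_*}\zeta^{\tilde p}\,dxdt$ by $(\sup_t\int v^\sig)^{p/N}$ times $\iint v^{\sig-2}|Dv|^p\zeta^p+\rho^{-p}\iint v^{\sig+p-2}$. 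Feeding the Caccioppoli estimate from the first step into this right-hand side reduces everything to $\iint v^{\sig+p-2}$, $\iint v^\sig$, and $\int v^\sig(x,t_o)\,dx$; the latter two are handled via H\"older's inequality with exponents $1/\sig,1/(1-\sig)$, yielding $\int_{K_{2\rho}} v^\sig(x,t)\,dx\le (2\rho)^N S^\sig$ where $S:=\sup_t\dashint_{K_{2\rho}} v\,dx$.

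After dividing by $|K_\rho|\cdot\theta\rho^p$ to pass to averages, this chain takes the schematic form
\[
\dashint\!\!\dashint v^{q_*}\,dxdt\le C\bigl(S+\theta^{-1/(p-2)}\bigr)^{q_*-(\sig+p-2)}\dashint\!\!\dashint v^{\sig+p-2}\,dxdt+(\text{lower-order corrections}).
\]
Since $\sig+p-2<q_*$, with gap $\sig p/N>0$, Young's inequality with small parameter $\eta$ yields $\dashint\!\dashint v^{\sig+p-2}\le\eta\,\dashint\!\dashint v^{q_*}+C_\eta(\cdot)$, whose first term is absorbed on the left-hand side. The main obstacle---indeed essentially the only nontrivial computation---is the careful bookkeeping of scaling factors in $\rho$ and $\theta$ through the two averages, the time cutoff, and the Sobolev embedding, so that the residual correction in the Young step comes out to exactly $\eta(1/\theta)^{q_*/(p-2)}=\eta(1/\theta)^{1+\sig(1+p/N)/(p-2)}$, with the sup-term picking up the matching exponent $q_*$ to produce $C_\eta S^{q_*}$.
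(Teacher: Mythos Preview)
Your overall strategy---weighted Caccioppoli (Lemma~\ref{Lm:2:1} with $\varep=\sig-1$), parabolic Sobolev applied to $w=v^{(\sig+p-2)/p}\zeta$, then Young---is exactly the paper's approach. However, the absorption step as you describe it does not close. With a single cutoff $\zeta\equiv 1$ on $K_\rho$ and $\operatorname{supp}\zeta\subset K_{2\rho}$, the Sobolev step only controls $\dashint\!\!\dashint_{K_\rho} v^{q_*}$, whereas the term $\rho^{-p}\iint v^{\sig+p-2}$ coming from $|D\zeta|^p$ (and hence the $\eta\,\dashint\!\!\dashint v^{q_*}$ you produce via Young) lives on $K_{2\rho}$. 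You therefore cannot ``absorb on the left-hand side'': the domain on the right is strictly larger. There is no a~priori bound on $\iint_{K_{2\rho}} v^{q_*}$ that lets you close the loop in one shot.

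The paper handles precisely this point by introducing two generic radii $\tfrac12<r_1<r_2<1$, running the Sobolev--Caccioppoli--Young chain to obtain
\[
\iint_{Q_{r_1}} v^{q_*}\le \tfrac19\iint_{Q_{r_2}} v^{q_*}
+\gm M_\sig^{1+p/N}+\gm\theta M_\sig^{\frac{p-2}{\sig}+1+\frac pN}(r_2-r_1)^{-p-\frac{N(p-2+\sig)}{\sig}},
\]
and then invoking the standard iteration lemma (e.g.\ \cite[Chapter~I, Lemma~4.3]{dibe-sv}) to remove the first term on the right. Only after this hole-filling step do the two residual pieces $\gm M_\sig^{1+p/N}/\theta$ and $\gm M_\sig^{(p-2)/\sig+1+p/N}$ remain, and it is the Young split of the \emph{first} of these (a product of $S^{\sig(1+p/N)}$ and $\theta^{-1}$) that delivers $C_\eta S^{q_*}+\eta\,\theta^{-q_*/(p-2)}$ with the $\eta$ on the correct term. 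Your sketch conflates this Young step with the earlier absorption, which is why the placement of $\eta$ versus $C_\eta$ is not transparent in your write-up.

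A minor point: the time cutoff $\chi$ you introduce is unnecessary and slightly at odds with how you later apply Sobolev to $w=v^{(\sig+p-2)/p}\zeta$ (which carries no $\chi$). A purely spatial cutoff suffices: in Lemma~\ref{Lm:2:1} the $(\partial_t\vp^p)_+$ term then vanishes, and the terminal boundary term $\int_{K_{2\rho}} v^\sig(\cdot,t_o)\,dx$ is simply bounded by $M_\sig$.
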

\begin{proof}
By a change of variables, we may consider this problem in the cylinder
\[
Q=K_1\times(-\theta, 0]. 
\]
Furthermore, for $i=1,\,2$ let us set 
\[
Q_{r_i}=K_{r_i}\times(-\theta,0]\quad\text{with}\quad\frac12<r_1<r_2<1;
\]
pick a non-negative, piecewise smooth, cutoff function on $K_{r_2}$, such that
\[
0\le\vp\le1,\qquad\vp=1\quad\text{ in }K_{r_1},\qquad |D\vp|\le\frac{1}{r_2-r_1}.
\]
An application of the parabolic Sobolev embedding (see, for example, \cite[Chapter~I, Proposition~3.1]{dibe-sv})
gives us that
\begin{align*}
\iint_{Q_{r_1}}&v^{p-2+\sig(1+\frac{p}{N})}\,dxdt\\
&\le\gm\iint_{Q_{r_2}}|D(v^{\frac{p-2+\sig}{p}}\vp)|^p\,dxdt\,
\bigg(\sup_{-\theta<t<0}\int_{K_{1}}v^{\sig}(x,t)\,dx\bigg)^{\frac{p}{N}}.
\end{align*}
For simplicity, let 
\[
M_\sig\df=\sup_{-\theta<t<0}\int_{K_{1}}v^{\sig}(x,t)\,dx.
\]
By Lemma \ref{Lm:2:1} with $\varep=-1+\sig$ we have
\begin{align*}
\iint_{Q_{r_2}}&|D(v^{\frac{p-2+\sig}{p}}\vp)|^p\,dxdt\\
&\le\gm\bigg(M_{\sig}+\iint_{Q_{r_2}}v^{p-2+\sig}|D\vp|^p\,dxdt\bigg).
\end{align*}
By Young's inequality
\begin{align*}
\iint_{Q_{r_2}}v^{p-2+\sig}|D\vp|^p M_{\sig}^{\frac{p}{N}}\,dxdt
&\le \frac1{9\gm}\iint_{Q_{r_2}}v^{p-2+\sig(1+\frac{p}{N})}\,dxdt\\
&+\gm M_{\sig}^{\frac{p-2+\sig(1+\frac{p}{N})}{\sig}}\iint_{Q_{r_2}}|D\vp|^{p+\frac{N(p-2+\sig)}{\sig}}\,dxdt.
\end{align*}
Combine the above estimates to obtain that
\begin{align*}
\iint_{Q_{r_1}}v^{p-2+\sig(1+\frac{p}{N})}\,dxdt&\le \frac19\iint_{Q_{r_2}}v^{p-2+\sig(1+\frac{p}{N})}\,dxdt\\
&+\gm M_{\sig}^{1+\frac{p}{N}}+
\gm\theta M_{\sig}^{\frac{p-2}{\sig}+1+\frac{p}{N}}\bigg(\frac{1}{r_2-r_1}\bigg)^{p+\frac{N(p-2+\sig)}{\sig}}.
\end{align*}
By an interpolation argument (see, for example, \cite[Chapter~I, Lemma~4.3]{dibe-sv}) one arrives at
\[
\begin{aligned}
\int_{-\theta}^{0}\int_{K_{\frac12}}v^{p-2+\sig(1+\frac{p}{N})}\,dxdt&\le\gm M_{\sig}^{1+\frac{p}{N}}+
\gm\theta M_{\sig}^{\frac{p-2}{\sig}+1+\frac{p}{N}},\\
\dashint_{-\theta}^{0}\int_{K_{\frac12}}v^{p-2+\sig(1+\frac{p}{N})}\,dxdt&\le\gm \frac{M_{\sig}^{1+\frac{p}{N}}}\theta+
\gm M_{\sig}^{\frac{p-2}{\sig}+1+\frac{p}{N}}.
\end{aligned}
\]
Note that
\[
\begin{aligned}
\gm M_{\sig}^{\frac{p-2}{\sig}+1+\frac{p}{N}}&=\gm \bigg(\sup_{-\theta<t<0}\int_{K_{1}}v^{\sig}(x,t)\,dx\bigg)^{\frac{p-2}{\sig}+1+\frac{p}{N}}\\
&\le\gm\bigg(\sup_{-\theta<t<0}\int_{K_{1}}v(x,t)\,dx\bigg)^{{p-2}+\sig\left(1+\frac{p}{N}\right)},
\end{aligned}
\]
and
\[
\begin{aligned}
\frac\gm\theta M_{\sig}^{1+\frac pN}&=\frac\gm\theta\bigg(\sup_{-\theta<t<0}\int_{K_1} v^\sig(x,t)\,dx\bigg)^{1+\frac pN}\\
&\le\frac\gm\theta\bigg(\sup_{-\theta<t<0}\int_{K_1} v(x,t)\,dx\bigg)^{\sig\left(1+\frac pN\right)}\\
&\le C_\eta\bigg(\sup_{-\theta<t<0}\int_{K_1} v(x,t)\,dx\bigg)^{p-2+\sig\left(1+\frac pN\right)}+\eta\left(\frac1\theta\right)^{1+\frac\sig{p-2}\left(1+\frac pN\right)};
\end{aligned}
\]
therefore, we conclude that
\[
\begin{aligned}
\dashint_{-\theta}^0\int_{K_{\frac12}} v^{p-2+\sig(1+\frac{p}{N})}\,dxdt
\le&C_\eta\bigg(\sup_{-\theta<t<0}\int_{K_1} v(x,t)\,dx\bigg)^{p-2+\sig(1+\frac{p}{N})}\\
&+\eta\left(\frac1\theta\right)^{1+\frac\sig{p-2}\left(1+\frac pN\right)}.
\end{aligned}
\]
Returning to the original variables yields the desired result.
\end{proof}
\begin{remark}
{\normalfont If one chooses
\[
\theta=\bigg[\dashint_{K_{2\rho}(x_o)}v(x,t_o)\,dx\bigg]^{2-p},
\]
then
\[
\dashint_{t_o-\theta\rho^p}^{t_o}\dashint_{K_{\rho}(x_o)}v^{p-2+\sig(1+\frac{p}{N})}\,dxdt
\le\gm\bigg[\sup_{t_o-\theta\rho^p<t<t_o}\dashint_{K_{2\rho}(x_o)}v\,dxdt\bigg]^{p-2+\sig(1+\frac{p}{N})}.
\]
}
\end{remark}
\vskip.2truecm
\noi We will need the following weak Harnack inequality, proved in \cite{kuusi2008}.
\begin{theorem}\label{Thm:3:7:1}
Let $u$ be a non-negative, local, weak super-solution 
to \eqref{Eq:1:1}--\eqref{Eq:1:2}.  There exist positive 
constants $c$ and $\gm_o$, depending only on the data 
$\datap$, such that for a.e. $s\in(0,T)$
\begin{equation}\label{Eq:3:7:1}
\bint_{K_\rho(y)}u(x,s)dx\le c
\Big(\frac{\rho^p}{T-s}\Big)^{\frac1{p-2}}
+\gm_o\inf_{K_{4\rho}(y)}u(\cdot,t)
\end{equation}
for all times 
\begin{equation*}
s+{\txty\frac12}\theta\rho^p\le t\le s+\theta\rho^p
\end{equation*}
where 
\begin{equation}\label{Eq:3:7:2}
\theta=\min\Big\{c^{2-p}\frac{T-s}{\rho^p}\,,\,\Big(
\bint_{K_\rho(y)}u(x,s)dx\Big)^{2-p}\Big\}.
\end{equation}
\end{theorem}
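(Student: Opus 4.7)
The plan is to prove this weak Harnack estimate by combining the reverse Hölder inequality of Lemma~\ref{Lm:2:2}, a Moser iteration on negative powers derived from Lemma~\ref{Lm:2:1}, and an expansion of positivity argument, all set in the intrinsic geometry dictated by $\theta$. Write $\bar u_s := \dashint_{K_\rho(y)} u(x,s)\,dx$. The first step is to dispose of the boundary case $\theta = c^{2-p}(T-s)/\rho^p$: here the first term $c(\rho^p/(T-s))^{1/(p-2)}$ on the right-hand side of \eqref{Eq:3:7:1} is already comparable to $\bar u_s$ if $c$ is large enough, so the inequality is trivial. Thus one may assume $\theta = \bar u_s^{\,2-p}$, and work inside an intrinsic cylinder $Q := K_{8\rho}(y)\times[s, s+\theta(8\rho)^p]\subset E_T$, which fits in $E_T$ precisely because of the cap $c^{2-p}(T-s)/\rho^p$ in the definition of $\theta$.

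The second step is a pointwise-from-integral bound. Apply Lemma~\ref{Lm:2:2} to $v=u$ on a slightly larger cylinder with $\theta = \bar u_s^{\,2-p}$: the absorption parameter $\eta$ can be chosen so small that the term $\eta(1/\theta)^{1+\sigma(1+p/N)/(p-2)} = \eta\, \bar u_s^{\,p-2+\sigma(1+p/N)}$ is dominated by the $L^1$-average term, yielding
\[
\dashint\!\!\!\dashint_{Q'} u^{p-2+\sigma(1+p/N)}\,dxdt \;\le\; \gamma\, \bar u_s^{\,p-2+\sigma(1+p/N)}
\]
on a slightly smaller cylinder $Q'\subset Q$. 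This can be iterated in the exponent $\sigma$, producing control of arbitrarily high powers of $u$ by $\bar u_s$. Complementarily, Lemma~\ref{Lm:2:1} applied to negative exponents $\varepsilon\in(-1,0)$ allows a standard Moser iteration on $u^{-q}$, delivering
\[
\sup_{Q''} u^{-q} \;\le\; \gamma\!\!\dashint\!\!\!\dashint_{Q'} u^{-q}\,dxdt
\]
on a further shrunk cylinder $Q''\supset K_{4\rho}(y)\times[s+\tfrac12\theta\rho^p,s+\theta\rho^p]$. Combining this with the first bound via a Bombieri-Giaquinta-Giusti type lemma (which splices the positive-power reverse Hölder with the negative-power iteration through the John-Nirenberg property) gives the pointwise lower bound $\inf_{Q''} u \ge c_1 \bar u_s$ on the $L^1$ average — up to the universal constant $\gamma_o^{-1}$.

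The third step, needed to match the geometry $K_{4\rho}(y)$ in \eqref{Eq:3:7:1} rather than $K_\rho(y)$, is expansion of positivity: starting from a set of positive measure inside $K_\rho(y)$ on which $u\ge c_1\bar u_s$ (obtained from the previous step or directly from Chebyshev plus a De Giorgi lemma), the logarithmic estimate from Lemma~\ref{Lm:2:1} with $\varepsilon\to 0^{-}$, together with a Caccioppoli inequality for $\log(c_1\bar u_s/u)_+$, propagates the lower bound to the whole $K_{4\rho}(y)$ for $t\in[s+\tfrac12\theta\rho^p,s+\theta\rho^p]$, possibly at a reduced level $c_1\bar u_s/\gamma_o$. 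This yields \eqref{Eq:3:7:1} with $\theta$ as specified.

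The main obstacle is the clean coupling of the reverse Hölder inequality with the intrinsic scale $\theta = \bar u_s^{\,2-p}$: the absorption constant $\eta$ in Lemma~\ref{Lm:2:2} must be tuned so the $\eta(1/\theta)^{\cdots}$ correction gets swallowed by the $L^1$-average term without destroying the dependence on structural data. A secondary delicate point is the expansion of positivity in the degenerate range $p>2$, where the intrinsic waiting time is itself governed by $\bar u_s^{\,2-p}$, forcing one to iterate the logarithmic estimate along a chain of nested intrinsic sub-cylinders that exactly fit between times $s+\tfrac12\theta\rho^p$ and $s+\theta\rho^p$.
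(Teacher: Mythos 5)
Your statement is one the paper does not prove at all: Theorem~\ref{Thm:3:7:1} is simply quoted from \cite{kuusi2008} (see also \cite{DBGV-mono}), where it is established via a measure-theoretical argument (an $L^1$-type Harnack estimate for the averages, a lower bound on the measure of the set where $u$ is large at later time levels, and expansion of positivity), not via a Moser scheme. So your sketch must be judged on its own merits, and there are genuine gaps. Your first reduction is fine: if the minimum in \eqref{Eq:3:7:2} is attained by $c^{2-p}(T-s)/\rho^p$ then indeed $\bint_{K_\rho(y)}u(x,s)\,dx\le c\big(\rho^p/(T-s)\big)^{1/(p-2)}$ and \eqref{Eq:3:7:1} is trivial. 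But in the second step you claim that Lemma~\ref{Lm:2:2} with $\theta=\bar u_s^{\,2-p}$ yields a bound of the high powers of $u$ by $\bar u_s$ itself. Lemma~\ref{Lm:2:2} only bounds them by $\sup_t\dashint_{K_{2\rho}(y)}u(\cdot,t)\,dx$; to replace this supremum over time by the average at the single time $s$ you need a separate $L^1$-form Harnack estimate (propagation in time of the $L^1$ average of a non-negative supersolution), which you never state or prove, and which is one of the substantive lemmas in Kuusi's actual proof.

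The more serious obstruction is the crossover step. You propose to splice the positive-power reverse H\"older estimate with a negative-power Moser iteration through a Bombieri--Giaquinta--Giusti lemma ``via the John--Nirenberg property.'' For $p>2$ this is precisely what is not available: the natural cylinders are intrinsically scaled by the solution itself, and no parabolic BMO/John--Nirenberg lemma adapted to this solution-dependent geometry is known; this failure is the reason the published proofs (Kuusi, DiBenedetto--Gianazza--Vespri) abandon Moser's crossover in favour of a measure estimate on a later time slice combined with expansion of positivity. In addition, Lemma~\ref{Lm:2:1} with $\varep\in(-1,0)$ only controls the \emph{positive} powers $u^{1+\varep}$, $1+\varep\in(0,1)$; a Moser iteration on $u^{-q}$ requires the analogous energy estimate for exponents below $-1$, which is not contained in the paper and whose constants degenerate exactly at $\varep=-1$. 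Finally, the expansion-of-positivity bookkeeping you defer (each enlargement from $K_\rho$ to $K_{4\rho}$ carries its own intrinsic waiting time of order $(\text{level})^{2-p}\rho^p$, and lowering the level \emph{lengthens} it when $p>2$) is where the constants $c$, $\gm_o$ and the window $[s+\frac12\theta\rho^p,\,s+\theta\rho^p]$ are actually produced, so it cannot be treated as a remark. As it stands, the proposal identifies several correct ingredients but does not constitute a proof, nor does it reproduce the cited one.
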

\begin{remark}
{\normalfont If $s$ and $\rho$ are chosen such that
\[
s+\frac {2c^{p-2}}{\left[\dsty\bint_{K_\rho(y)}u(x,s)\,dx\right]^{p-2}}\,\rho^p<T,
\]
then
\begin{align*}
&c\left(\frac{\rho^p}{T-s}\right)^{\frac1{p-2}}<\frac1{2^{\frac1{p-2}}}\bint_{K_\rho(y)}u(x,s)\,dx\\
&\theta=\left[\bint_{K_\rho(y)}u(x,s)\,dx\right]^{2-p},
\end{align*}
and therefore, 
\begin{equation}\label{WHI}
\bint_{K_\rho(y)}u(x,s)dx\le\bar\gm\inf_{K_{4\rho}(y)}u(\cdot,t)
\end{equation}
for all times 
\begin{equation*}
s+{\txty\frac12}\theta\rho^p\le t\le s+\theta\rho^p.
\end{equation*}
Moreover, $\bar\gm=\frac{\gm_o}{1-\left(\frac12\right)^{\frac1{p-2}}}$, and therefore the constant is stable as $p\to2$.}
\end{remark}
\vskip.2truecm
\noi Another result we will rely on is the following (see \cite[Corollary~3.1]{GSV}).
\begin{lemma}\label{LBL1}
Let $u$ be a non-negative, local, weak super-solution to \eqref{Eq:1:1}--\eqref{Eq:1:2} 
in the cylinder $K_{2\rho}(y)\times [\bar t,\bar t+T]$. 
Suppose that $$\inf_{K_{2\rho}(y)} u(x,\bar t)\geq k\quad\text{ for some }k>0.$$ 
Then for all $t\in (\bar t,\bar t+T]$ we have
\begin{equation}\label{LB}
\inf_{K_\rho(y)} u(x,t)\geq \frac{k}{2}\left( 1+\frac{t-\bar t}{\nu k^{2-p} (2\rho)^p}\right)^\frac{1}{2-p},
\end{equation}
where $\nu\in(0,1)$ is a constant that depends only on the data $\datap$. 
\end{lemma}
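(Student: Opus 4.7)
Estimate \eqref{LB} is an intrinsic expansion-of-positivity result in decay form. The plan is to combine a single-step lower bound over one intrinsic time slab with an iteration in time that stacks such steps to reproduce the profile $(1+(t-\bar t)/\tau)^{1/(2-p)}$, where $\tau=\nu k^{2-p}(2\rho)^p$. For the single step, I would prove: there exists $\nu_*\in(0,1)$ depending only on $\datap$ such that if $\inf_{K_{2\rho}(y)}u(\cdot,\bar t)\ge k$, then $\inf_{K_\rho(y)}u(\cdot,t)\ge k/2$ for all $t\in(\bar t,\bar t+\nu_*k^{2-p}(2\rho)^p]$. Since the equation is parabolic (cf.\ the discussion after \eqref{Eq:1:5}), $(u-k/2)_-$ is a non-negative sub-solution, to which I would apply a De~Giorgi iteration on nested cubes $K_{\rho_n}\searrow K_\rho$ and truncation levels $h_n\searrow k/2$ over the intrinsic cylinder $K_{2\rho}(y)\times(\bar t,\bar t+\nu_*k^{2-p}(2\rho)^p]$; the Caccioppoli energy estimate for $(u-h_n)_-$ coupled with parabolic Sobolev embedding produces the standard fast-decaying recursion, and $\nu_*$ is fixed small enough to drive its limit to zero, i.e.\ $u\ge k/2$ on $K_\rho$ throughout.

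To iterate, set $k_n=k/2^n$, $\tau_n=\nu_*k_n^{2-p}(2\rho)^p=\tau_0\cdot 2^{n(p-2)}$, and $s_n=\sum_{j<n}\tau_j=\tau_0(2^{n(p-2)}-1)/(2^{p-2}-1)$. At each stage the previous conclusion gives $u\ge k_n$ on $K_\rho$, whereas the single-step hypothesis requires $u\ge k_n$ on $K_{2\rho}$; this gap is bridged by the weak Harnack-type inequality \eqref{WHI}, which propagates positivity outward in space at a universal multiplicative cost absorbed into $\nu_*$. With the choice $\nu=\nu_*/(2^{p-2}-1)$, a direct algebraic computation shows that the right-hand side of \eqref{LB} evaluated at $t=\bar t+s_n$ equals exactly $k/2^{n+1}$, so on the slab $(\bar t+s_n,\bar t+s_{n+1}]$ the single-step output $u\ge k_{n+1}$ dominates \eqref{LB} pointwise (by monotonicity of the right-hand side in $t$); concatenating over all slabs yields \eqref{LB} for every $t\in(\bar t,\bar t+T]$.

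The main obstacle is the single-step estimate: the De~Giorgi iteration for $(u-k/2)_-$ must run on a cylinder of \emph{exactly} intrinsic height $\nu_*k^{2-p}\rho^p$, and closing the recursion with a universal $\nu_*=\nu_*(\datap)$ requires a delicate balance of the gradient and time-derivative terms in the Caccioppoli inequality. A secondary subtlety is closing the iteration geometrically: the single step transfers information from $K_{2\rho}$ to $K_\rho$, so without the spatial-expansion step via \eqref{WHI} the cubes would shrink toward the point $y$; invoking \eqref{WHI} restores the hypothesis on $K_{2\rho}$ at the start of each new slab, with only a universal loss in the constant.
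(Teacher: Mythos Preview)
The paper does not supply its own proof of this lemma; it simply cites \cite[Corollary~3.1]{GSV}. Your single-step-plus-iteration scheme and the algebra matching the profile $(1+(t-\bar t)/\tau)^{1/(2-p)}$ are the right skeleton, but the spatial re-expansion via \eqref{WHI} is a genuine gap. By hypothesis $u$ is a super-solution only in $K_{2\rho}(y)\times[\bar t,\bar t+T]$, whereas the weak Harnack inequality requires the equation to hold in a cube strictly larger than the one carrying the infimum; to conclude anything about $\inf_{K_{2\rho}}u$ you would need the super-solution property on a set strictly larger than $K_{2\rho}$, which is not available. Even setting this aside, \eqref{WHI} produces its lower bound only on the \emph{later} intrinsic window $[s+\tfrac12\theta r^p,\,s+\theta r^p]$, with $\theta$ fixed by an integral average of $u$ that you do not control from above, so the window may be arbitrarily short and need not interface with your slabs $\tau_n$. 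Finally, the weak Harnack inequality is substantially deeper than Lemma~\ref{LBL1}---in the usual development it is built \emph{from} estimates of exactly this type---so invoking it here is backwards.

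The route taken in \cite{GSV} (see also \cite[Chapter~3, Proposition~4.1]{DBGV-mono}) avoids any outward expansion. One iterates the single-step De~Giorgi lemma along a \emph{telescoping} family of radii $\rho_n=\rho+2^{-n}\rho$, so that $\rho_0=2\rho$, $\rho_n\searrow\rho$, and the $n$-th step passes from $K_{\rho_n}$ to $K_{\rho_{n+1}}\supset K_\rho$; the cubes shrink only toward the fixed limit $K_\rho$, and no re-expansion is ever needed. The single-step constants then depend quantitatively on the gap $\rho_n-\rho_{n+1}=2^{-(n+1)}\rho$; tracking this dependence and balancing it against the growth $k_n^{2-p}$ (choosing, if necessary, a slower geometric decay of the levels than $k/2^n$) one checks that the resulting time slabs still exhaust $(\bar t,\bar t+T]$ and reproduce the bound in \eqref{LB}.
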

Finally, we recall the notion of capacity introduced in \cite[\S~4]{GLL}.

Let $\Om\subset\rn$ be an open set, and $Q\df=\Om\times(t_1,t_2)$: $Q$ is an open cylinder in $\rr^{N+1}$. In the following we will refer to such sets as {\it open parabolic cylinders}. For any compact set $K\subset Q$, we define the {\it parabolic capacity of $K$ with respect to $Q$} as 
\begin{equation}\label{Eq:4:1}
\begin{aligned}
\gm_p(K,Q)=&\inf\left\{\iint_Q|D\vp|^p\,dxdt: \right.\\
&\vp\in C^\infty_o(Q),\ \ \vp\ge1\ \ \text{ on a neighborhood of }\ \ K\bigg\},
\end{aligned}
\end{equation}
where $D\vp$ denotes the gradient of $\vp$ with respect to the space variables only. 

The notion of the elliptic capacity is quite standard. Indeed, for every compact set $F\subset \Om$ we define
\[
{\rm cap}_p(F,\Om)=\inf\left\{\int_{\Om}|D\psi|^p\,dx: \psi\in C_o^{\infty}(\Om),\ \ \psi\ge1\text{ on }F\right\}.
\] 
For $p\ge N$ one always assume that $\Omega\subset\subset\rn$, since ${\rm cap}_p(F,\rn)=0$ for $p\ge N$.
It should be remarked that an explicit calculation (see, for example, \cite[page 35]{HKM}) gives us that
\begin{equation*}
\begin{aligned}
&{\rm cap}_p(K_\rho(x_o), K_{2\rho}(x_o))=c_1(N,p)\rho^{N-p},\quad \forall p>1,\\
&{\rm cap}_p(\{x_o\}, K_{\rho}(x_o))=c_2(N,p)\rho^{N-p},\quad \forall p>N,
\end{aligned}
\end{equation*}
where $c_1$ and $c_2$ are positive constants with indicated dependence.
Hence, when $p>N$, the relative capacity defined in \eqref{Eq:delta} is always
bounded below, i.e.,
\[
\dl(\rho)\ge\gm_o(N,p)\quad\text{for some }\gm_o\in(0,1).
\]
As a result, condition \eqref{Eq:fat} always holds when $p>N$. For further details and properties about the elliptic capacity, see for example
\cite[Chapter~4]{Evans-Gariepy}, \cite[Chapter~2]{HKM}, \cite[Chapter~2]{Ma-Zie}, or \cite{frehse}.

Now we point out the connection between the two notions of capacity. 
Let $Q=\Omega\times(t_1,t_2)$, and for any set $E\subset\rr^{N+1}$ define $E_\tau=E\cap\{t=\tau\}$. Then, we have the following result (see \cite[Proposition~A.2]{Biroli-Mosco}).
\begin{proposition}\label{Prop:4:2}
Let $K\subset Q$ be compact. Then,
\begin{equation}\label{eq:4:2}
\gm_p(K,Q)=\int_{t_1}^{t_2}{\rm cap}_p(K_\tau,\Omega)\,d\tau.
\end{equation}
\end{proposition}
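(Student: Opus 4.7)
The plan is to prove the two inequalities $\gm_p(K,Q) \ge \int_{t_1}^{t_2} {\rm cap}_p(K_\tau, \Om)\,d\tau$ and $\gm_p(K,Q) \le \int_{t_1}^{t_2} {\rm cap}_p(K_\tau, \Om)\,d\tau$ separately, one being essentially trivial and the other requiring a time-partition-of-unity construction.

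For the lower bound, take any admissible $\vp \in C_o^\infty(Q)$ with $\vp \ge 1$ on an open neighborhood $V$ of $K$. For every $\tau \in (t_1, t_2)$, the spatial slice $\vp(\cdot,\tau)$ belongs to $C_o^\infty(\Om)$ and satisfies $\vp(\cdot,\tau) \ge 1$ on the open set $V_\tau = \{x\in\Om : (x,\tau)\in V\}$, which is an open neighborhood of $K_\tau$ in $\Om$. Hence $\vp(\cdot,\tau)$ is admissible in the definition of ${\rm cap}_p(K_\tau,\Om)$, giving ${\rm cap}_p(K_\tau,\Om) \le \int_\Om |D\vp(x,\tau)|^p\,dx$ for every $\tau$. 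Integrating in $\tau$ and invoking Fubini yields $\int_{t_1}^{t_2} {\rm cap}_p(K_\tau,\Om)\,d\tau \le \iint_Q |D\vp|^p\,dxdt$, and taking the infimum over $\vp$ proves the lower bound.

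The reverse inequality requires synthesising a single admissible $\vp$ out of nearly-optimal elliptic test functions for the individual slices $K_\tau$. The key preliminary fact is that the slice map $\tau \mapsto K_\tau$ is upper semi-continuous in the Hausdorff sense, because $K$ is compact in $Q$: any sequences $\tau_n\to\tau$ and $x_n\in K_{\tau_n}$ admit a subsequence with $(x_n,\tau_n)\to(x,\tau)\in K$, so $x\in K_\tau$. Combined with outer regularity of the elliptic $p$-capacity on compacta, this yields upper semi-continuity, and hence Lebesgue measurability and boundedness, of $\tau\mapsto{\rm cap}_p(K_\tau,\Om)$; moreover $K_\tau=\emptyset$ outside some compact subinterval $[t_1',t_2']\subset(t_1,t_2)$. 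Fix $\eps>0$ and partition $[t_1',t_2']$ into finitely many short intervals $I_j$, slightly enlarged to open intervals $\tilde I_j\subset(t_1,t_2)$. Using upper semi-continuity of the slice map, choose for each $j$ an open set $U_j\subset\Om$ with $K_\tau\subset U_j$ for every $\tau\in\tilde I_j$, and a function $\psi_j\in C_o^\infty(\Om)$ with $\psi_j\ge 1$ on $U_j$ and $\int_\Om|D\psi_j|^p\,dx\le\sup_{\tau\in\tilde I_j}{\rm cap}_p(K_\tau,\Om)+\eps$ (possible by outer regularity). Let $\{\eta_j\}$ be a smooth partition of unity on $[t_1',t_2']$ subordinate to $\{\tilde I_j\}$, with $0\le\eta_j\le 1$ and $\sum_j\eta_j\equiv 1$ on $[t_1',t_2']$, and define $\vp(x,t)=\sum_j\eta_j(t)\psi_j(x)$. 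Then $\vp\in C_o^\infty(Q)$, and whenever $(x,t)$ is sufficiently close to $K$ one has $\psi_j(x)\ge 1$ for every $j$ with $\eta_j(t)>0$, so $\vp(x,t)\ge\sum_j\eta_j(t)=1$ on a neighborhood of $K$. Jensen's inequality applied to the convex function $\xi\mapsto|\xi|^p$ and the convex combination $\sum_j\eta_j(t)=1$ yields $|D\vp(x,t)|^p\le\sum_j\eta_j(t)|D\psi_j(x)|^p$, whence $\iint_Q|D\vp|^p\,dxdt\le\sum_j\bigl(\int\eta_j(t)\,dt\bigr)\bigl(\sup_{\tau\in\tilde I_j}{\rm cap}_p(K_\tau,\Om)+\eps\bigr)$, which is a Riemann-type upper sum for the upper semi-continuous bounded function $\tau\mapsto{\rm cap}_p(K_\tau,\Om)+\eps$; letting the mesh of the partition and then $\eps$ tend to zero completes the proof.

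The principal obstacle is the upper-bound construction, where one must manufacture a jointly smooth space-time cutoff from spatial data that genuinely depends on the slice $K_\tau$. The upper semi-continuity of $\tau\mapsto K_\tau$ for compact $K$, combined with outer regularity of ${\rm cap}_p$ on compact subsets of $\Om$, is precisely what permits a single spatial function $\psi_j$ to serve all slices within a short time window; the smooth temporal partition of unity together with Jensen's inequality then converts this into the claimed integral identity without losing any constants.
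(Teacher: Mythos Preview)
The paper does not supply its own proof of this proposition; it merely cites \cite[Proposition~A.2]{Biroli-Mosco}. Your argument is therefore not being compared against anything in the present paper, but it is essentially correct and follows the standard route (which is also the one in Biroli--Mosco): slicing for the easy inequality, and a compactness/partition-of-unity construction for the reverse one.

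Two small points are worth tightening. First, in your step ``choose $\psi_j$ with $\psi_j\ge 1$ on $U_j\supset\bigcup_{\tau\in\tilde I_j}K_\tau$ and $\int_\Omega|D\psi_j|^p\,dx\le\sup_{\tau\in\tilde I_j}{\rm cap}_p(K_\tau,\Omega)+\eps$'', the logical order should be reversed: for each $\tau_0$ first pick a near-optimal $\psi_{\tau_0}$ (so $\psi_{\tau_0}\ge1$ on some open $V_{\tau_0}\supset K_{\tau_0}$ with energy $\le{\rm cap}_p(K_{\tau_0},\Omega)+\eps$), and \emph{then} use upper semi-continuity of $\tau\mapsto K_\tau$ to find an interval $J_{\tau_0}$ with $K_\tau\subset V_{\tau_0}$ for all $\tau\in J_{\tau_0}$; compactness of $[t_1',t_2']$ then yields the finite cover $\{\tilde I_j\}$. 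As written, it is not immediately clear why a single $\psi_j$ with the stated energy bound exists for a \emph{prechosen} $U_j$. Second, your Jensen step uses $\sum_j\eta_j(t)=1$; outside $[t_1',t_2']$ one only has $\sum_j\eta_j(t)\le1$, but the convexity inequality $|\sum_j\eta_j D\psi_j|^p\le\sum_j\eta_j|D\psi_j|^p$ still holds in that case, so the estimate goes through. With these clarifications your ``Riemann-type upper sum'' is genuinely an upper Darboux sum for the bounded upper semi-continuous function $\tau\mapsto{\rm cap}_p(K_\tau,\Omega)$, and such sums do converge to the Lebesgue integral as the mesh shrinks.
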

\section{Auxiliary Lemmas}
Fix $(x_o,t_o)\in S_T$, and consider the cylinder 
\begin{equation}\label{Eq:cyl1}
Q=K_{16\rho}(x_o)\times[s,t],
\end{equation} 
where $s$, $t$ are such that $0<s<t_o<t<T$, and let $\Sigma\df=S_T\cap Q$. 
Our estimates are based on the following lemma, stated and proved in \cite[Lemma~2.1]{GLL}.

\begin{lemma}\label{Lm:2:0}
 Take any number $k$ such that
 \begin{equation}\label{Eq:level-k}
 k\ge\sup_\Sigma g.
 \end{equation} 
 Let $u$ be a weak solution to \eqref{Eq:1:6} in the cylinder $Q$, and define 
  \begin{displaymath}
    u_k=
    \begin{cases}
      (u-k)_+, &\text{in }Q\cap E_T, \\
      0, & \text{in }Q\setminus E_T.
    \end{cases}
  \end{displaymath}  
  Then $u_k$ is a (local) weak sub-solution to \eqref{Eq:1:6} in the cylinder $Q$. The
  same conclusion holds for the zero extension of $u_h=(h-u)_+$ for
  truncation levels $h\leq \inf_{\Sigma}g$.
\end{lemma}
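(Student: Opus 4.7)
The proof combines two ingredients: the parabolicity of the equation, guaranteed by \eqref{Eq:1:2}, which ensures that $(u-k)_+$ is a local weak sub-solution of \eqref{Eq:1:1} inside $E_T$ with $\bl{A}$ replaced by $\bl{A}(x,t,k+(u-k)_+,D(u-k)_+)$; and the hypothesis $k\ge\sup_\Sigma g$, which forces $(u-k)_+$ to have zero trace on the lateral boundary $\Sigma$, so that the zero extension across $\partial E$ is Sobolev-regular on the enlarged cylinder $Q$.

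I would proceed in two steps. \emph{Step 1.} I would verify that $(u-k)_+(\cdot,\tau)\in W^{1,p}_o(E)$ for a.e.\ $\tau$ in the time-interval of $Q$. The boundary condition in \eqref{Eq:1:6} reads $(u-g)_+(\cdot,\tau)\in W^{1,p}_o(E)$, while the continuity of $g$ on $\overline{E_T}$ together with $k\ge\sup_\Sigma g$ gives $(g-k)_+(\cdot,\tau)\in W^{1,p}_o(E)$; a standard truncation manipulation then yields $(u-k)_+(\cdot,\tau)\in W^{1,p}_o(E)$. Consequently, the zero extension $u_k$ belongs to $C(s,t;L^2_{\loc}(K_{16\rho}(x_o)))\cap L^p(s,t;W^{1,p}_{\loc}(K_{16\rho}(x_o)))$, with gradient $Du_k=D(u-k)_+\chi_{E\cap Q}$ on $Q\cap E_T$ and vanishing identically off $E_T$.

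\emph{Step 2.} For any non-negative admissible test function $\varphi$ compactly supported in $Q$, the integrals against $u_k$ and $Du_k$ in the target sub-solution inequality reduce to integrals over $Q\cap E_T$, so that the inequality on $Q$ becomes the weak sub-solution inequality for $(u-k)_+$ inside $E_T$ tested against $\varphi$. Since the standard formulation restricts test functions to those with compact support in $E$ (for a.e.\ time), I would approximate $\varphi$ by $\chi_\epsilon\varphi$, where $\chi_\epsilon\in C^\infty_c(E)$ equals $1$ off an $\epsilon$-neighborhood of $\partial E$, plug $\chi_\epsilon\varphi$ into the sub-solution inequality valid in $E_T$ by parabolicity, and pass to the limit $\epsilon\to 0$. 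The symmetric case $u_h=(h-u)_+$ for $h\le\inf_\Sigma g$ follows from the same argument applied to $-u$.

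The delicate step is controlling the commutator
\[
\iint_{Q\cap E_T} \bl{A}(x,t,k+(u-k)_+,D(u-k)_+)\cdot\varphi\,D\chi_\epsilon\,dxdt,
\]
since $|D\chi_\epsilon|\sim 1/\epsilon$ on a strip around $\partial E$ of comparable width. The argument makes essential use of $(u-k)_+\in W^{1,p}_o(E)$: intuitively, the Sobolev regularity of $(u-k)_+$ up to the boundary together with its vanishing trace provides enough cancellation for this commutator to vanish in the limit, once the cutoff $\chi_\epsilon$ is chosen in a manner compatible with an approximating sequence $w_\epsilon\in C^\infty_c(E)$ of $(u-k)_+$ in $W^{1,p}(E)$. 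This is precisely the technical core of \cite[Lemma~2.1]{GLL}, which we follow for the full verification.
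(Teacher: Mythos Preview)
Your proposal is correct and aligns with the paper: the paper does not give its own proof of this lemma but simply cites \cite[Lemma~2.1]{GLL}, and your sketch outlines precisely that argument, ultimately deferring the technical commutator estimate to the same reference.
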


Let $k$ be any number which satisfies \eqref{Eq:level-k}, and
\begin{equation}\label{Eq:super}
\left\{
\begin{aligned}
&\text{define}\ u_k=(u-k)_+,\\
&\text{choose}\ \mu>0\ \text{ such that }\ \mu\ge\sup_Q u_k,\\ 
&\text{define}\ v:Q\rightarrow\rr_+,\quad v=\mu-u_k.
\end{aligned}
\right.
\end{equation}
It is not hard to verify that $v$ is a weak super-solution to
\eqref{Eq:1:6} in the whole $Q$. 

Finally, let
\begin{align*}
&\dl(\rho)\df=\frac{{\rm cap}_p(K_{\rho}(x_o)\backslash E,K_{\frac32\rho}(x_o))}{{\rm cap}_p(K_{\rho}(x_o),K_{\frac32\rho}(x_o))},
\end{align*}
%
and
\begin{equation}\label{Eq:theta-bar}
\bar\theta\df=\left(\mu [\dl(\rho)]^{\frac{1}{p-1}}\right)^{2-p}.
\end{equation}
We have the following.
\begin{lemma}\label{Lm:5:1}
Let $\pto$, $Q$, $u_k$, $\mu$, $v$ as in \eqref{Eq:cyl1}--\eqref{Eq:super}, take $\bar\theta$ as in \eqref{Eq:theta-bar}, and assume that $t_o-\bar\theta\rho^p\ge s$. Then there exists a constant $\gm_1>1$, that depends only on the data $\datap$, such that
\begin{equation}\label{Eq:low-bd2}
\mu [\dl(\rho)]^{\frac{1}{p-1}}\le
\gm_1\sup_{t_o-\bar\theta\rho^p<t<t_o}\dashint_{K_{2\rho}(x_o)} v(x,t)\,dx.
\end{equation}
\end{lemma}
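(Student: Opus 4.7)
The plan is to combine a slicewise-in-time capacitary inequality with the reverse Hölder estimate of Lemma~\ref{Lm:2:2} and the energy estimate of Lemma~\ref{Lm:2:1}; the specific choice $\bar\theta=(\mu\delta(\rho)^{1/(p-1)})^{2-p}$ is precisely what makes the error term in Lemma~\ref{Lm:2:2} carry the correct scaling. Writing $\lambda:=\mu\,\delta(\rho)^{1/(p-1)}$, so that $\bar\theta=\lambda^{2-p}$, the goal becomes $\lambda\le\gamma_1 M$ with $M:=\sup_t\dashint_{K_{2\rho}(x_o)}v(\cdot,t)\,dx$.

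\textbf{Capacitary bound.} Since $k\ge\sup_\Sigma g\ge u$ on $S_T\cap Q$, the truncation $u_k$ vanishes on $(K_{3\rho/2}(x_o)\setminus E)\times(t_o-\bar\theta\rho^p,t_o)$, so $v\equiv\mu$ there. Choose $\eta\in C_c^\infty(K_{3\rho/2}(x_o))$ with $\eta\equiv 1$ on $K_\rho(x_o)$ and $|D\eta|\le C/\rho$. Then $\eta\,v(\cdot,t)/\mu\in W_0^{1,p}(K_{3\rho/2}(x_o))$ equals $1$ on $K_\rho(x_o)\setminus E$, so the definition of the $p$-capacity, together with ${\rm cap}_p(K_\rho,K_{3\rho/2})\simeq\rho^{N-p}$ and \eqref{Eq:delta}, gives for a.e.\ $t$
\[
\mu^p\delta(\rho)\rho^{N-p}
\le C\Bigl(\int_{K_{3\rho/2}(x_o)}|Dv|^p\,dx+\rho^{-p}\int_{K_{3\rho/2}(x_o)}v^p\,dx\Bigr).
\]
Integrating over $(t_o-\bar\theta\rho^p,t_o)$ and using the identity $\mu^p\delta(\rho)=\mu\lambda^{p-1}$ yields
\[
\mu\lambda^{p-1}\bar\theta\rho^N
\le C\Bigl(\iint|Dv|^p\,dx\,dt+\rho^{-p}\iint v^p\,dx\,dt\Bigr).
\]

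\textbf{Reverse Hölder and energy.} The $v^p$-integral is bounded via $v\le\mu$ by $\mu\iint v^{p-1}$, and Lemma~\ref{Lm:2:2} applied with $\sigma=N/(N+p)$ and $\theta=\bar\theta$ gives the key identity $\bar\theta^{-(p-1)/(p-2)}=\lambda^{p-1}$, whence
\[
\iint v^{p-1}\le\bar\theta\rho^{p+N}\bigl(C_\eta M^{p-1}+\eta\,\lambda^{p-1}\bigr).
\]
For the gradient integral I would apply Lemma~\ref{Lm:2:1} to the non-negative super-solution $v$ with a suitable $\varepsilon\in(-1,0)$ and a parabolic cutoff supported in $K_{3\rho/2}\times(t_o-2\bar\theta\rho^p,t_o)$, using $|Dv|^p\le\mu^{1-\varepsilon}\,v^{\varepsilon-1}|Dv|^p$ on the left. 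The terminal-time term $\int v^{1+\varepsilon}(\cdot,t_\ast)$ produced by Lemma~\ref{Lm:2:1} is controlled by choosing $t_\ast\in(t_o-\bar\theta\rho^p,t_o)$ via the mean-value theorem, so that $\int v^{1+\varepsilon}(\cdot,t_\ast)\le\bar\theta^{-1}\rho^{-p}\iint v^{1+\varepsilon}$. A second application of Lemma~\ref{Lm:2:2} to the space-time integrals of $v^{\varepsilon+p-1}$ (which by the same scaling computation produces an error term of the form $\eta\,\lambda^{p-1+\varepsilon}$) then yields a matching bound for $\iint|Dv|^p$.

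\textbf{Absorption.} Substituting both bounds into the integrated capacitary inequality and handling the mixed quantities $\mu^{1-\varepsilon}M^{\varepsilon+p-1}$ by Young's inequality (splitting them into an $M^{p-1}$-piece and a $\lambda^{p-1}$-piece via $M\le\mu$ and $\lambda\le\mu$) leads to an inequality of the form
\[
\lambda^{p-1}\le C\bigl(C_\eta M^{p-1}+\eta\,\lambda^{p-1}\bigr).
\]
Choosing $\eta$ small enough to absorb the last term into the left-hand side and extracting $(p-1)$-th roots yields $\lambda\le\gamma_1 M$, which is the claim.

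The main obstacle is the delicate balancing of exponents: the Wiener power $\delta^{1/(p-1)}$ is \emph{forced} by the identity $\bar\theta^{-(p-1)/(p-2)}=\lambda^{p-1}$, and any other time scaling would leave a residual factor in the error term of Lemma~\ref{Lm:2:2} that could not be absorbed. The second delicate ingredient is the treatment of the terminal-time boundary term in Lemma~\ref{Lm:2:1}, which must be converted into an integral via the mean-value theorem before the reverse Hölder estimate can be used.
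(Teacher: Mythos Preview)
Your capacitary lower bound and the handling of the $v^p$-term are essentially the same as the paper's. The gap is in the gradient term. When you write $|Dv|^p\le\mu^{1-\varepsilon}v^{\varepsilon-1}|Dv|^p$ with $\varepsilon\in(-1,0)$ and then apply Lemmas~\ref{Lm:2:1} and \ref{Lm:2:2}, the bound you obtain for $\iint|Dv|^p$ carries the prefactor $\mu^{1-\varepsilon}$ instead of $\mu$. After dividing through by $\mu\bar\theta\rho^N$, the error term from Lemma~\ref{Lm:2:2} becomes
\[
\eta\,\frac{\mu^{1-\varepsilon}\lambda^{\varepsilon+p-1}}{\mu\lambda^{p-1}}
=\eta\,\Big(\frac{\mu}{\lambda}\Big)^{-\varepsilon}
=\eta\,\delta(\rho)^{\,\varepsilon/(p-1)},
\]
and since $\varepsilon<0$ this blows up as $\delta(\rho)\to0$; it cannot be absorbed by a choice of $\eta$ depending only on the data. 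The same defect appears in the ``main'' term: Young's inequality on $\mu^{1-\varepsilon}M^{\varepsilon+p-1}$ produces a piece $\eta\,\mu^{p-1}$, not $\eta\,\lambda^{p-1}$, and $\mu^{p-1}/\lambda^{p-1}=\delta(\rho)^{-1}$ is again unbounded. Letting $\varepsilon\to0^-$ does not help because the constants in Lemma~\ref{Lm:2:1} diverge like $|\varepsilon|^{-p}$.

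The paper circumvents this by \emph{not} estimating $\iint|Dv|^p$ via Lemma~\ref{Lm:2:1}. Instead it tests the sub-solution equation for $u_k$ with $u_k\zeta^p$, obtaining the Caccioppoli-type inequality
\[
\iint_{Q_2}|D(v\zeta)|^p\,dx\,dt
\le\gamma\mu\iint_{Q_2}|Dv|^{p-1}|D\zeta|\,dx\,dt
+\gamma\iint_{Q_2}v^p|D\zeta|^p\,dx\,dt
+\gamma\mu\iint_{Q_2}v|\zeta_t|\,dx\,dt,
\]
where the gradient appears only to the power $p-1$ and the prefactor is exactly $\mu$. The term $\iint|Dv|^{p-1}$ is then split by H\"older into $\big(\iint v^{-1-\varepsilon}|Dv|^p\big)^{(p-1)/p}\big(\iint v^{(1+\varepsilon)(p-1)}\big)^{1/p}$; the first factor is controlled by Lemma~\ref{Lm:2:1}, the second by Lemma~\ref{Lm:2:2}, and crucially the total power of $\mu$ stays at $1$, so every term on the right scales exactly like $\mu\bar\theta\rho^N\big(C_\eta M^{p-1}+\eta\lambda^{p-1}\big)$ and absorption goes through uniformly in $\delta(\rho)$.
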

\begin{proof}
Without loss of generality, we may assume that $(x_o,t_o)=(0,0)$. Construct three cylinders:
\begin{align*}
&Q_1=K_{\rho}\times(-\frac34\bar\theta\rho^p,-\frac14\bar\theta\rho^p);\\
&Q_2=K_{\frac32\rho}\times(-\frac78\bar\theta\rho^p,-\frac18\bar\theta\rho^p);\\
&Q_3=K_{2\rho}\times(-\bar\theta\rho^p,0).
\end{align*}
Introduce the standard cut-off functions $\z$ and $\vp$ such that
\begin{equation*}
\z(x,t)=\left\{
\begin{array}{ll}
{1}\quad (x,t)\in Q_1\\
{}\\
{0}\quad (x,t)\notin Q_2
\end{array}\right.
\text{and}\quad
\vp(x,t)=\left\{
\begin{array}{ll}
{1}\quad (x,t)\in Q_2\\
{}\\
{0}\quad (x,t)\notin Q_3.
\end{array}\right.
\end{equation*}
We use the test function $u_k\z^p$ in the weak formulation,
modulus a standard Steklov average;
a straightforward calculation similar to the one in \cite[Lemma 5.1]{GLL} gives us that
\begin{equation*}
\begin{aligned}
\iint_{Q_2}|D(v\z)^p|\,dxdt&\le\gm\mu\iint_{Q_2}|Dv|^{p-1}|D\z|\,dxdt+\gm\iint_{Q_2}v^p|D\z|^p\,dxdt\\
&+\gm\mu\iint_{Q_2}v|\z_t|\,dxdt.
\end{aligned}
\end{equation*}
The last term on the right-hand side is estimated by
\begin{align*}
\mu\iint_{Q_2}v|\z_t|&\,dxdt\le\frac{|Q_2|\mu}{\bar\theta\rho^p}\dashint\!\!\dashint_{Q_2}v\,dxdt\\
&\le\gm\mu\bar\theta\rho^N\bigg[\sup_{-\bar\theta\rho^p<t<0}\dashint_{K_{2\rho}}v(x,t)\,dx\bigg]\ \frac1{\bar\theta}\\
&=\gm\mu^{\frac1{p-1}}\bar\theta^{\frac1{p-1}}\rho^{\frac N{p-1}}\bigg[\sup_{-\bar\theta\rho^p<t<0}\dashint_{K_{2\rho}}v(x,t)\,dx\bigg]\ \mu^{\frac{p-2}{p-1}}\bar\theta^{\frac{p-2}{p-1}}\rho^{\frac {N(p-2)}{p-1}}\frac1{\bar\theta}\\
&\le C_{\eta_1}\mu\theta\rho^N\bigg[\sup_{-\bar\theta\rho^p<t<0}\dashint_{K_{2\rho}} v(x,t)\,dx\bigg]^{p-1}
+\eta_1\mu\bar\theta\rho^N\frac1{\bar\theta^{\frac{p-1}{p-2}}},
\end{align*}
where $\eta_1\in(0,1)$ will be determined later.
The second term is estimated by Lemma \ref{Lm:2:2} choosing $\sig$ such that
\[
p-2+\sig\left(1+\frac pN\right)=p-1.
\]
We have
\begin{align*}
\iint_{Q_2}v^p|D\z|^p\,dxdt&\le\frac{\mu}{\rho^p}\iint_{Q_2} v^{p-1}\,dxdt\\
&\le\gm\frac{\mu |Q_2|}{\rho^p}\dashint\!\!\dashint_{Q_2} v^{p-1}\,dxdt\\
&\le\gm\mu\bar\theta\rho^N\bigg[C_{\eta_2}\left(\sup_{-\bar\theta\rho^p<t<0}\dashint_{K_{2\rho}} v(x,t)\,dx\right)^{p-1}+\frac{\eta_2}\gm\frac1{\bar\theta^{\frac{p-1}{p-2}}}\bigg]\\
&=\gm C_{\eta_2}\mu\bar\theta\rho^N\left(\sup_{-\bar\theta\rho^p<t<0}\dashint_{K_{2\rho}} v(x,t)\,dx\right)^{p-1}+
\eta_2\mu\bar\theta\rho^N\frac1{\bar\theta^{\frac{p-1}{p-2}}},
\end{align*}
where once more $\eta_2\in(0,1)$ will be determined later.
Next, the first term is estimated by H\"older's inequality
\begin{equation*}
\begin{aligned}
\iint_{Q_2}&|Dv|^{p-1}\,dxdt\\
&\le \bigg(\iint_{Q_2}|Dv|^p v^{-1-\eps}\,dxdt\bigg)^{1-\frac1p}
\bigg(\iint_{Q_2}v^{(1+\eps)(p-1)}\,dxdt\bigg)^{\frac1p},
\end{aligned}
\end{equation*}
and the term with the gradient is estimated by Lemma \ref{Lm:2:1}, namely
\begin{equation*}
\begin{aligned}
&\iint_{Q_2}|Dv|^p v^{-1-\eps}\,dxdt\\
&\le \gm(\eps)\bigg[\iint_{Q_3} v^{p-\eps-1}|D\vp|^p\,dxdt
+\iint_{Q_3} v^{1-\eps}|\vp_t|\,dxdt\bigg],
\end{aligned}
\end{equation*}
where $\eps$ is a positive number such that
\[0<\eps<p-2.\]
Combining the above two inequalities yields
\begin{align*}
&\frac{\mu}{\rho}\iint_{Q_2}|Dv|^{p-1}\,dxdt\\
&\le\frac{\gm\mu}{\rho}\bigg(\iint_{Q_3} v^{p-\eps-1}|D\vp|^p\,dxdt\bigg)^{1-\frac1p}
\bigg(\iint_{Q_2}v^{(1+\eps)(p-1)}\,dxdt\bigg)^{\frac1p}\\
&+\frac{\gm\mu}{\rho}\bigg(\iint_{Q_3} v^{1-\eps}|\vp_t|\,dxdt\bigg)^{1-\frac1p}
\bigg(\iint_{Q_2}v^{(1+\eps)(p-1)}\,dxdt\bigg)^{\frac1p}.
\end{align*}
Let us focus on the first term on the right-hand side. Choosing $\eps$ smaller if necessary, and $\sig$ such that 
first $p-\eps-1=p-2+\sig\left(1+\frac pN\right)$ and then $(1+\eps)(p-1)=p-2+\sig\left(1+\frac pN\right)$,
by Lemma \ref{Lm:2:2} and repeated applications of Young's inequality we have
\begin{align*}
&\frac{\gm\mu}{\rho}\bigg[\frac{|Q_3|}{\rho^p}\dashint\!\!\dashint_{Q_3} v^{p-\eps-1}\,dxdt\bigg]^{1-\frac1p}
\bigg[|Q_2|\dashint\!\!\dashint_{Q_2}v^{(1+\eps)(p-1)}\,dxdt\bigg]^{\frac1p}\\
&\le\frac{\gm\mu}\rho\left[\frac{|Q_3|}{\rho^{p}}\right]^{\frac{p-1}p}
\bigg[C_{\eta_3}\left(\sup_{-\bar\theta\rho^p<t<0}\dashint_{K_{2\rho}} v(x,t)\,dx\right)^{p-\eps-1}+\eta_3^{\frac p{p-1}}\left(\frac1{\bar\theta}\right)^{\frac{p-\eps-1}{p-2}}\bigg]^{\frac{p-1}p}\\
&\quad\times|Q_2|^{\frac1p}\bigg[C_{\eta_4}\left(\sup_{-\bar\theta\rho^p<t<0}\dashint_{K_{2\rho}} v(x,t)\,dx\right)^{(1+\eps)(p-1)}+\eta_4^{p}\left(\frac1{\bar\theta}\right)^{\frac{(1+\eps)(p-1)}{p-2}}\bigg]^{\frac1p}\\
&\le\gm\mu\bar\theta\rho^N\bigg[C_{\eta_3}\left(\sup_{-\bar\theta\rho^p<t<0}\dashint_{K_{2\rho}} v(x,t)\,dx\right)^{p-\eps-1}+\eta_3^{\frac p{p-1}}\left(\frac1{\bar\theta}\right)^{\frac{p-\eps-1}{p-2}}\bigg]^{\frac{p-1}p}\\
&\quad\times\bigg[C_{\eta_4}\left(\sup_{-\bar\theta\rho^p<t<0}\dashint_{K_{2\rho}} v(x,t)\,dx\right)^{(1+\eps)(p-1)}+\eta_4^p\left(\frac1{\bar\theta}\right)^{\frac{(1+\eps)(p-1)}{p-2}}\bigg]^{\frac1p}\\
&\le\gm\mu\bar\theta\rho^N\bigg[C_{\eta_3}^{\frac{p-1}p}\left(\sup_{-\bar\theta\rho^p<t<0}\dashint_{K_{2\rho}} v(x,t)\,dx\right)^{\frac{(p-\eps-1)(p-1)}p}+\eta_3\left(\frac1{\bar\theta}\right)^{\frac{(p-\eps-1)(p-1)}{p(p-2)}}\bigg]\\
&\quad\times\bigg[C_{\eta_4}^{\frac1p}\left(\sup_{-\bar\theta\rho^p<t<0}\dashint_{K_{2\rho}} v(x,t)\,dx\right)^{\frac{(1+\eps)(p-1)}p}+\eta_4\left(\frac1{\bar\theta}\right)^{\frac{(1+\eps)(p-1)}{p(p-2)}}\bigg]\\
&\le\gm\mu\bar\theta\rho^N\bigg[C_{\eta_5}\left(\sup_{-\bar\theta\rho^p<t<0}\dashint_{K_{2\rho}} v(x,t)\,dx\right)^{p-1}+\frac{\eta_5}{\gm}\left(\frac1{\bar\theta}\right)^{\frac{p-1}{p-2}}\bigg]\\
&=\gm C_{\eta_5}\mu\bar\theta\rho^N\left(\sup_{-\bar\theta\rho^p<t<0}\dashint_{K_{2\rho}} v(x,t)\,dx\right)^{p-1}+{\eta_5}\mu\bar\theta\rho^N \frac1{\bar\theta^{\frac{p-1}{p-2}}},
\end{align*} 
where once more $\eta_5\in(0,1)$ will be chosen later. The second term on the right-hand side is estimated by
\begin{align*}
&\frac{\gm\mu}{\rho}\bigg(\iint_{Q_3} v^{1-\eps}|\vp_t|\,dxdt\bigg)^{1-\frac1p}
\bigg(|Q_2|\dashint\!\!\dashint_{Q_2}v^{(1+\eps)(p-1)}\,dxdt\bigg)^{\frac1p}\\
&\le\frac{\gm\mu}{\rho}\bigg(\bigg[|Q_3|\dashint\!\!\dashint_{Q_3} v\,dxdt\bigg]^{1-\eps}\frac{|Q_3|^{\eps}}{\bar\theta\rho^p}\bigg)^{1-\frac1p}\\
&\times|Q_3|^{\frac1p}\bigg[C_{\eta_6}\left(\sup_{-\bar\theta\rho^p<t<0}\dashint_{K_{2\rho}} v(x,t)\,dx\right)^{(1+\eps)(p-1)}+\eta_6^{p}\left(\frac1{\bar\theta}\right)^{\frac{(1+\eps)(p-1)}{p-2}}\bigg]^{\frac{1}{p}}\\
&\le\gm\mu\bar\theta\rho^N\bigg[\left(\sup_{-\bar\theta\rho^p<t<0}\dashint_{K_{2\rho}} v(x,t)\,dx\right)^{1-\eps}\frac1{\bar\theta}\bigg]^{\frac{p-1}{p}}\\
&\times\bigg[C_{\eta_6}\left(\sup_{-\bar\theta\rho^p<t<0}\dashint_{K_{2\rho}} v(x,t)\,dx\right)^{(1+\eps)(p-1)}+\eta_6^{p}\left(\frac1{\bar\theta}\right)^{\frac{(1+\eps)(p-1)}{p-2}}\bigg]^{\frac{1}{p}}\\
&\le\gm\mu\bar\theta\rho^N
\bigg[C_{\eta_7}\left(\sup_{-\bar\theta\rho^p<t<0}\dashint_{K_{2\rho}} v(x,t)\,dx\right)^{p-\eps-1}+\eta_7^{\frac p{p-1}}\left(\frac1{\bar\theta}\right)^{\frac{p-\eps-1}{p-2}}\bigg]^{\frac{p-1}{p}}\\
&\times\bigg[C_{\eta_6}\left(\sup_{-\bar\theta\rho^p<t<0}\dashint_{K_{2\rho}} v(x,t)\,dx\right)^{(1+\eps)(p-1)}+\eta_6^{p}\left(\frac1{\bar\theta}\right)^{\frac{(1+\eps)(p-1)}{p-2}}\bigg]^{\frac{1}{p}}\\
&\le\gm\mu\bar\theta\rho^N\bigg[C_{\eta_8}\left(\sup_{-\bar\theta\rho^p<t<0}\dashint_{K_{2\rho}} v(x,t)\,dx\right)^{p-1}+\frac{\eta_8}{\gm}\left(\frac1{\bar\theta}\right)^{\frac{p-1}{p-2}}\bigg]\\
&=\gm C_{\eta_8}\mu\bar\theta\rho^N\left(\sup_{-\bar\theta\rho^p<t<0}\dashint_{K_{2\rho}} v(x,t)\,dx\right)^{p-1}+{\eta_8}\mu\bar\theta\rho^N \frac1{\bar\theta^{\frac{p-1}{p-2}}},
\end{align*}
where as before $\eta_8\in(0,1)$ is still to be chosen.
Therefore, combining all the above estimates we arrive at
\[
\begin{aligned}
\iint_{Q_2}|D(v\z)|^p\,dxdt&\le\bar C\mu\bar\theta\rho^N\bigg[\sup_{-\bar\theta\rho^p<t<0}\dashint_{K_{2\rho}} v(x,t)\,dx\bigg]^{p-1}\\
&+(\eta_1+\eta_2+\eta_5+\eta_8)\mu\bar\theta\rho^N\frac1{\bar\theta^{\frac{p-1}{p-2}}},
\end{aligned}
\]
where $\bar C$ takes into account all the $\gm C_{\eta_i}$-terms.
On the other hand, the left-hand side is bounded from below as
\begin{align*}
\iint_{Q_2}|D(v\z)|^p\,dxdt&\ge\mu^{p}\gm_p(Q_1\setminus E_T,Q_2)\\
&=\mu^{p}\int^0_{-\bar\theta\rho^p}{\rm cap}_p(K_{\rho}\setminus E, K_{\frac32\rho})
\chi_{(-\frac34\bar\theta\rho^p,-\frac14\bar\theta\rho^p)}(t)\,dt\\
&=\frac12\mu^p\bar\theta\rho^p{\rm cap}_p(K_{\rho}\setminus E, K_{\frac32\rho}).
\end{align*}
Thus, recalling the definition of $\dl(\rho)$, we obtain
\[
\begin{aligned}
\mu^{p-1}\frac{{\rm cap}_p(K_{\rho}\setminus E, K_{\frac32\rho})}{{\rm cap}_p(K_{\rho}, K_{\frac32\rho})}\le&\gm
\bigg[\sup_{-\bar\theta\rho^p<t<0}\dashint_{K_{2\rho}} v(x,t)\,dx\bigg]^{p-1}\\
&+\gm(\eta_1+\eta_2+\eta_5+\eta_8)\mu^{p-1}\frac{{\rm cap}_p(K_{\rho}\setminus E, K_{\frac32\rho})}{{\rm cap}_p(K_{\rho}, K_{\frac32\rho})}.
\end{aligned}
\]
Choosing $\eta_1$, $\eta_2$, $\eta_5$, $\eta_8$ such that $\gm(\eta_1+\eta_2+\eta_5+\eta_8)\le\frac12$, 
the above estimate yields
\[
\mu [\dl(\rho)]^{\frac1{p-1}}\le\gm\sup_{-\bar\theta\rho^p<t<0}\dashint_{K_{2\rho}} v(x,t)\,dx.
\]
\end{proof}
We conclude this section, with a second lemma, which will be crucial in the proof of our main result.
\begin{lemma}\label{Lm:3:3}
Let $\pto$, $Q$, $u_k$, $\mu$, $v$ as in \eqref{Eq:cyl1}--\eqref{Eq:super}, take $\bar\theta$ as in \eqref{Eq:theta-bar}, and assume that $s\le t_o-3\gm_*\bar\theta\rho^p<t_o\le t$
for some $\gm_*>1$ to be determined only in terms of the data $\datap$. 
Then there exists a constant $\gm_2>1$, that depends only on the data $\datap$, such that
\begin{equation}\label{Eq:low-bd3}
\mu [\dl(\rho)]^{\frac{1}{p-1}}\le
\gm_2\inf_{K_{2\rho}(x_o)} v(\cdot,t),
\end{equation}
for all $\dsty t\in[t_o-\gm_*\bar\theta\rho^p,t_o]$.
\end{lemma}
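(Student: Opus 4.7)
The plan is to bridge the $L^1$-average lower bound of Lemma~\ref{Lm:5:1} with the pointwise infimum claim \eqref{Eq:low-bd3} in two moves: first, I would promote an average to a pointwise infimum at a single time via the weak Harnack inequality (Theorem~\ref{Thm:3:7:1}), and then I would propagate that infimum forward in time by means of Lemma~\ref{LBL1}. The constant $\gm_*$ will be chosen at the very end, large enough---in terms of the data only---to absorb a handful of size requirements arising along the way.

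First, set $\tilde t:=t_o-2\gm_*\bar\theta\rho^p$. Since $s\le t_o-3\gm_*\bar\theta\rho^p\le\tilde t-\bar\theta\rho^p$ (using $\gm_*>1$), Lemma~\ref{Lm:5:1} applied with $\tilde t$ in place of $t_o$ produces a time $\tau_*$ in the interval $(t_o-(2\gm_*+1)\bar\theta\rho^p,\,t_o-2\gm_*\bar\theta\rho^p)$ satisfying
\[
\dashint_{K_{2\rho}(x_o)} v(x,\tau_*)\,dx\;\ge\;\tfrac{1}{2\gm_1}\,\mu[\dl(\rho)]^{\frac{1}{p-1}}.
\]
Next, Theorem~\ref{Thm:3:7:1} is applied at $(y,s)=(x_o,\tau_*)$ with cube half-side $r:=2\rho$, relative to the ambient top time $t\ge t_o$. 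The definition of $\bar\theta$ together with the previous display yield $[\dashint v(\cdot,\tau_*)]^{2-p}\le(2\gm_1)^{p-2}\bar\theta$; hence, provided $\gm_*$ is sufficiently large, $t-\tau_*\ge 2\gm_*\bar\theta\rho^p\ge c^{p-2}r^p/[\dashint v(\cdot,\tau_*)]^{p-2}$, which forces the choice $\theta=[\dashint v(\cdot,\tau_*)]^{2-p}$ in \eqref{Eq:3:7:2}. A direct computation also gives
\[
c\Bigl(\tfrac{r^p}{t-\tau_*}\Bigr)^{\frac{1}{p-2}}\;\le\;c\Bigl(\tfrac{2^{p-1}}{\gm_*}\Bigr)^{\frac{1}{p-2}}\mu[\dl(\rho)]^{\frac{1}{p-1}},
\]
which is at most half the right-hand side of the displayed average bound once $\gm_*$ is large enough. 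Inserting these estimates into \eqref{Eq:3:7:1} and absorbing, one arrives at
\[
\inf_{K_{8\rho}(x_o)} v(\cdot,t_1)\;\ge\;\gm_3\,\mu[\dl(\rho)]^{\frac{1}{p-1}}
\]
for some $t_1\in[\tau_*+\tfrac12\theta r^p,\tau_*+\theta r^p]$ and some $\gm_3>0$ depending only on the data.

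Since $\theta r^p\le(2\gm_1)^{p-2}2^p\bar\theta\rho^p$, a further enlargement of $\gm_*$ ensures $t_1\le t_o-\gm_*\bar\theta\rho^p$. Applying Lemma~\ref{LBL1} on $K_{8\rho}(x_o)$ with $\bar t=t_1$ and $k=\gm_3\mu[\dl(\rho)]^{\frac{1}{p-1}}$, and using $k^{2-p}=\gm_3^{2-p}\bar\theta$ together with $t-t_1\le(2\gm_*+1)\bar\theta\rho^p$ for every $t\in[t_o-\gm_*\bar\theta\rho^p,t_o]$, the ratio $(t-t_1)/[\nu k^{2-p}(8\rho)^p]$ is bounded above by a constant depending only on the data, so that the ``tail factor'' $(1+\cdot)^{1/(2-p)}$ is bounded below by a positive data-only constant. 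Lemma~\ref{LBL1} therefore yields a uniform lower bound of the form $\gm_2^{-1}\mu[\dl(\rho)]^{\frac{1}{p-1}}$ for $\inf_{K_{4\rho}(x_o)} v(\cdot,t)$ on the entire interval $[t_o-\gm_*\bar\theta\rho^p,t_o]$, and the inclusion $K_{2\rho}\subset K_{4\rho}$ yields \eqref{Eq:low-bd3}.

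The principal obstacle is the coordination of the several lower bounds on $\gm_*$ that these steps impose: one to fit the auxiliary cylinder of the first step inside $Q$, a second to select the correct branch of $\theta$ in \eqref{Eq:3:7:2}, a third to absorb the Harnack error term $c(r^p/(t-\tau_*))^{1/(p-2)}$, and a fourth to place $t_1$ below $t_o-\gm_*\bar\theta\rho^p$ in the propagation step. Each threshold depends only on $\datap$, and the final $\gm_*$ is taken to be their maximum.
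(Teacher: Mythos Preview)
Your argument is correct and follows the same three-step strategy as the paper's own proof: Lemma~\ref{Lm:5:1} for an $L^1$ lower bound at some time, the weak Harnack inequality (Theorem~\ref{Thm:3:7:1}) to turn this into a pointwise infimum, and Lemma~\ref{LBL1} to propagate that infimum forward onto the target interval. The only noticeable variation is that you apply Lemma~\ref{Lm:5:1} at the shifted time $\tilde t=t_o-2\gm_*\bar\theta\rho^p$ and then fix $\gm_*$ as the maximum of several data-only thresholds, whereas the paper applies Lemma~\ref{Lm:5:1} at $t_o$ itself and sets $\gm_*=\gm_1^{p-2}$; your shift has the pleasant side effect that the propagation step lands directly on $[t_o-\gm_*\bar\theta\rho^p,t_o]$ without any further translation.
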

\begin{proof}
We may assume that $(x_o,t_o)=(0,0)$. By our notion \eqref{Eq:1:4} of solutions, it
is not hard to verify that 
\[
[-\gamma_*\bar{\theta}\rho^p,0]\ni t\to\dashint_{K_{2\rho}}v(x,t)\,dx\,\text{ is a continuous function.}
\]
Let $t_1\in[-\bar\theta\rho^p,0]$ be the point where the supremum in \eqref{Eq:low-bd2} is achieved, namely
\begin{equation}\label{Eq:step:1}
\mu [\dl(\rho)]^{\frac1{p-1}}\le\gm_1\dashint_{K_{2\rho}} v(x,t_1)\,dx.
\end{equation}
On the other hand, by the weak Harnack inequality \eqref{WHI}, we have
\begin{equation}\label{Eq:step:2}
\bint_{K_{2\rho}}v(x,t_1)dx\le\bar\gm\inf_{K_{8\rho}}v(\cdot,t)
\end{equation}
for any $\dsty t\in[t_1+\frac12\tilde\theta\rho^p,t_1+\tilde\theta\rho^p]$,
where
\[
\tilde\theta=\left[\bint_{K_{2\rho}}v(x,t_1)dx\right]^{2-p}\rho^p.
\]
Combining \eqref{Eq:step:1} and \eqref{Eq:step:2} yields
\begin{equation}\label{Eq:step:3}
\frac1{\bar\gm \gm_1}\mu [\dl(\rho)]^{\frac1{p-1}}\le\inf_{K_{8\rho}}v(\cdot,t)
\end{equation}
for any $\dsty t\in[t_1+\frac{\gm_*}2\bar\theta\rho^p,t_1+\gm_*\bar\theta\rho^p]$
with $\gm_*=\gm_1^{p-2}$. 
At this stage, the time interval where the infimum is taken is somewhat undefined, 
since a precise value of $t_1$ is not known. The next argument is meant to 
provide a precise localization in time of a lower bound for $v$. 

By its definition, $t_1+\gm_*\bar\theta\rho^p\ge0$. On the other hand,
\[
t_1+\gm_*\bar\theta\rho^p= t_1+\gm_*([\dl(\rho)]^{\frac1{p-1}}\mu)^{2-p}\rho^p\le \gm_*([\dl(\rho)]^{\frac1{p-1}}\mu)^{2-p}\rho^p.
\]
Therefore, if we apply Lemma~\ref{LBL1} with $\bar t=t_1+\gm_*\bar\theta\rho^p$, and take
\[
t\in[\gm_*([\dl(\rho)]^{\frac1{p-1}}\mu)^{2-p}\rho^p,2\gm_*([\dl(\rho)]^{\frac1{p-1}}\mu)^{2-p}\rho^p],
\]
we have $t-\bar t\le 2\gm_*([\dl(\rho)]^{\frac1{p-1}}\mu)^{2-p}\rho^p$, and substituting in \eqref{LB}, we conclude, where $\gm_2$ depends on $\nu$, $\bar\gm$, $\gm_1$, and $p$.
\end{proof}
\section{Proof of Theorem~\ref{Thm:1:1}}\label{S:final}
Let $\pto\in S_T$, and for $R_o>0$ set
\[
Q_{R_o}=K_{2R_o}(x_o)\times(t_o-3\gm_*[\dl(R_o)]^{\frac{2-p}{p-1}}R_o^{p-\eps},t_o],
\]
where $0<\eps<1$ and $\dl(R_o)$ has been defined in \eqref{Eq:delta}.
As discussed in \S~\ref{S:intro}, we may take $R_o$ so small that 
$$(t_o-3\gm_*[\dl(R_o)]^{\frac{2-p}{p-1}}R_o^{p-\eps},t_o]\subset(0,T].$$
Next, if we choose the level
\[
k=\sup_{Q_{R_o}\cap S_T}g,
\]
then Lemma~\ref{Lm:2:0} can be applied. From now on, we deal with such a level, and with the corresponding truncated function $u_k\df=(u-k)_+$. Moreover, we assume that $u_k$ has been extended to zero in $Q_{R_o}\backslash E_T$.
\subsection{The First Step}
Consider $u_{k}$, and choose $\mu_o>0$ such that 
\begin{equation}\label{Eq:1step}
\mu_o=\sup_{Q_{R_o}}u_{k}.
\end{equation}
Without loss of generality, we may assume that 
\begin{equation}\label{Eq:in-req}
\mu_o^{2-p}R_o^p\le R_o^{p-\eps}.
\end{equation}
Indeed, if \eqref{Eq:in-req} is not satisfied, then $\mu_o$ has a power-like decay with respect to $R_o$, and there is nothing to prove. If we let
\[
v\df=\mu_o-u_{k},
\]
and
\[
\bar\theta_o\df=\left(\mu_o[\dl(R_o)]^{\frac1{p-1}}\right)^{2-p},
\]
by \eqref{Eq:in-req}, the assumptions of Lemma~\ref{Lm:3:3} are satisfied, and we conclude that
\[
\mu_o[\dl(R_o)]^{\frac1{p-1}}\le\gm_2\inf_{K_{2R_o}(x_o)}v(\cdot,t)
\]
for all $t\in\bigg[t_o-\gm_*\left(\mu_o[\dl(R_o)]^{\frac1{p-1}}\right)^{2-p}R_o^p,t_o\bigg]$, that is
\begin{equation}\label{Eq:first-bound}
\sup_{Q_1} u_{k}\le\mu_o\left(1-\frac1{\gm_2}[\dl(R_o)]^{\frac1{p-1}}\right),
\end{equation}
where
\begin{equation}\label{Eq:first-cyl}
\begin{aligned}
Q_1&=K_{2R_o}(x_o)\times\bigg[t_o-\gm_*\left(\mu_o[\dl(R_o)]^{\frac1{p-1}}\right)^{2-p}R_o^{p},t_o\bigg]\\
       &=K_{2R_o}(x_o)\times[t_o-\gm_*\bar\theta_oR_o^{p},t_o].
\end{aligned}
\end{equation}
\subsection{The Induction}
We now proceed by induction. In order to do that, we first need the following result
which is based on the fact that we assume {\it a priori} the Wiener integral \eqref{Eq:fat}
is divergent. The idea of selecting a specific subsequence is taken from \cite{skrypnik-2000}.
For ease of notation, we set $A(s)=[\dl(s)]^{\frac1{p-1}}$.
\begin{lemma}\label{Lm:4:1}
Assume that 
\[
\int_0^1A(s)\frac{ds}s=\infty.
\]
Then there exist $\bar c\in(0,1)$ depending only the data,
 and a subsequence $\{\rho_{i_j}\}$ of
the sequence $\{\rho_i=\bar c^i R_o\}$, such that
\begin{equation}\label{Eq:bar-c}
3\left[\mu_{i_{j+1}}A(\rho_{i_{j+1}})\right]^{2-p}\rho_{i_{j+1}}^{p}
\le\left[\mu_{i_{j}}A(\rho_{i_{j}})\right]^{2-p}\rho_{i_{j}}^{p},
\end{equation}
where
\[
\mu_{i_{j+1}}=\mu_{i_j}\left[1-\frac1{\gm_2}A(\rho_{i_j})\right]. 
\]
Moreover,
\begin{equation}\label{Eq:sub-bd}
\sum_{i=0}^{i_{k+1}-1}A(\rho_i)\le 2\sum_{j=0}^{k}A(\rho_{i_j})\quad\text{ for any }k=0,1,2,\cdots.
\end{equation}
\end{lemma}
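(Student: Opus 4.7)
The plan is a greedy construction of $\{i_j\}$. Fix $\bar c \in (0,1)$ small (depending only on the data; its precise value will be pinned down below), set $i_0 := 0$, and inductively let $i_{j+1}$ be the smallest index $m > i_j$ for which \eqref{Eq:bar-c} holds. The first substantive step is to rewrite \eqref{Eq:bar-c} cleanly: substituting $\mu_{i_{j+1}} = \mu_{i_j}(1 - A(\rho_{i_j})/\gm_2)$, cancelling the factor $\mu_{i_j}^{2-p}$, and absorbing the bounded quantity $(1 - A(\rho_{i_j})/\gm_2)^{2-p} \in [1, (\gm_2/(\gm_2-1))^{p-2}]$ (using $A \le 1$) into a data-dependent constant $C_* \ge 3$, a sufficient form of \eqref{Eq:bar-c} for the pair $(i_j, m)$ is $C_* A(\rho_m)^{2-p}\rho_m^p \le A(\rho_{i_j})^{2-p}\rho_{i_j}^p$. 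Since $2 - p < 0$ and $\rho_m/\rho_{i_j} = \bar c^{m - i_j}$, this is equivalent to the pointwise lower bound
\[
A(\rho_m) \ge C_*^{1/(p-2)} \bar c^{(m - i_j) p/(p-2)} A(\rho_{i_j}).
\]

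Next I would fix $\bar c$ so small that $K := C_*^{1/(p-2)} \bar c^{p/(p-2)}/(1 - \bar c^{p/(p-2)}) \le 1$. By the minimality of $i_{j+1}$, every skipped index $m \in (i_j, i_{j+1})$ violates the displayed lower bound, so
\[
\sum_{m = i_j + 1}^{i_{j+1} - 1} A(\rho_m) < C_*^{1/(p-2)} A(\rho_{i_j}) \sum_{n \ge 1} \bar c^{n p/(p-2)} = K\,A(\rho_{i_j}) \le A(\rho_{i_j}),
\]
whence $\sum_{m = i_j}^{i_{j+1} - 1} A(\rho_m) \le 2 A(\rho_{i_j})$. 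Telescoping in $j = 0, \ldots, k$ produces \eqref{Eq:sub-bd}.

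It remains to ensure that the greedy procedure never stalls. Suppose for contradiction that at some $j_0$ no admissible $m > i_{j_0}$ exists; then the failure estimate applies for every $m > i_{j_0}$, forcing $\sum_{m > i_{j_0}} A(\rho_m)$ to be dominated by a convergent geometric series, and hence $\sum_{m \ge 0} A(\rho_m) < \infty$. Combined with the standard comparison between the geometric-grid sum $\sum_m A(\bar c^m R_o)$ and the Wiener integral $\int_0^1 A(s)\, ds/s$, this contradicts the divergence hypothesis \eqref{Eq:fat}, showing that the subsequence is necessarily infinite.

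I expect the main obstacle to be precisely this last sum-versus-integral comparison: since $A(s) = \delta(s)^{1/(p-1)}$ is not \emph{a priori} monotone in $s$, the equivalence between convergence of $\sum_m A(\bar c^m R_o)$ and convergence of $\int_0^1 A(s)\, ds/s$ is classical but nontrivial, relying on scaling and quasi-monotonicity properties of the $p$-capacity. A secondary technical point is to verify that the absorption of $(1-A/\gm_2)^{2-p}$ into $C_*$ is legitimate for any admissible value of $\gm_2$ produced by Lemma~\ref{Lm:3:3}, which can be arranged by enlarging $\gm_2$ if necessary without affecting its dependence on the data.
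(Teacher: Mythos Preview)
Your proposal is correct and follows essentially the same greedy-selection strategy as the paper: start with $i_0=0$, pick $i_{j+1}$ as the smallest admissible successor, bound the skipped terms by a convergent geometric series, and use divergence of $\sum_i A(\rho_i)$ to guarantee the subsequence never terminates. The only cosmetic difference is that the paper fixes the selection rule to be $A(\rho_{i_{j+1}})/A(\rho_{i_j})>(1/2)^{i_{j+1}-i_j}$ first and then chooses $\bar c=2^{-\lambda}$ large enough in $\lambda$ so that \eqref{Eq:bar-c} follows from this rule, whereas you take \eqref{Eq:bar-c} itself as the selection rule and then choose $\bar c$ small enough to force $K\le1$ and hence \eqref{Eq:sub-bd}; the two choices of $\bar c$ coincide up to data-dependent constants. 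Your identification of the sum-versus-integral comparison as the one nontrivial ingredient is accurate; the paper simply asserts it (``the divergence of the Wiener integral implies the divergence of the series''), and, as you note, its justification rests on the quasi-monotonicity of $\delta(\rho)$ coming from standard scaling properties of the $p$-capacity.
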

\begin{proof}
First, we observe that the divergence of the Wiener integral implies the divergence of the series
\[\sum_{i=0}^{\infty}A(\rho_i),\]
which does not require any quantitative information about $\bar c$.
 Next, for any non-negative integer $i$, there exists $j\in\nn$ such that
\[
\frac{A(\rho_{i+j})}{A(\rho_i)}>\left(\frac12\right)^j;
\]
otherwise, it would lead to the convergence of the series $\displaystyle \sum_{i=0}^{\infty}A(\rho_i)$.

\noindent Let $i_o=0$ and choose $i_1>i_o$ to be the smallest positive integer satisfying
\[
\frac{A(\rho_{i_1})}{A(\rho_{i_o})}>\left(\frac12\right)^{i_1-i_o};
\]
by induction, we obtain a subsequence $\{i_j\}$ such that
$i_{j+1}>i_j$ is the smallest positive integer satisfying
\begin{equation}\label{Eq:Aij}
\frac{A(\rho_{i_{j+1}})}{A(\rho_{i_j})}>\left(\frac12\right)^{i_{j+1}-i_j}.
\end{equation}
Next, we observe that 
\[
3\left[\mu_{i_{j+1}}A(\rho_{i_{j+1}})\right]^{2-p}\rho_{i_{j+1}}^{p}
\le3\left[\left(1-\frac1{\gm_2}\right)\mu_{i_j}A(\rho_{i_{j+1}})\right]^{2-p}\rho_{i_{j+1}}^{p}.
\]
Hence, in order to show \eqref{Eq:bar-c}, we need only to show
\[
3\left[\left(1-\frac1{\gm_2}\right)A(\rho_{i_{j+1}})\mu_{i_j}\right]^{2-p}\rho_{i_{j+1}}^{p}
\le\left[\mu_{i_{j}}A(\rho_{i_{j}})\right]^{2-p}\rho_{i_{j}}^{p}.
\]
This is equivalent to 
\[
\frac{A(\rho_{i_{j+1}})}{A(\rho_{i_j})}\ge3^{\frac1{p-2}}\left(1-\frac1{\gm_2}\right)^{-1}\bar c^{\frac{p}{p-2}(i_{j+1}-i_j)}.
\]
Comparing this with \eqref{Eq:Aij}, one easily obtains \eqref{Eq:bar-c} by choosing $\bar c=2^{-\lm}$
with some large $\lm$ satisfying
\[
2^{\frac{\lm p}{p-2}-1}\ge 3^{\frac1{p-2}}\left(1-\frac1{\gm_2}\right)^{-1}.
\]
Finally, according to the way of choosing $i_{j+1}$, we must have
\begin{equation*}
\frac{A(\rho_i)}{A(\rho_{i_j})}\le\left(\frac12\right)^{i-i_j}\quad\text{ for any }i_j\le i\le i_{j+1}-1.
\end{equation*}
This implies
\[
\sum_{i=i_j}^{i_{j+1}-1}A(\rho_i)\le A(\rho_{i_j})\sum_{i=i_j}^{\infty}\left(\frac12\right)^{i-i_j}\le 2A(\rho_{i_j}).
\]
Summing the above inequality over $j$ from $0$ to $k$ yields 
\[
\sum_{i=0}^{i_{k+1}-1}A(\rho_i)
=\sum_{j=0}^{k}\sum_{i=i_j}^{i_{j+1}-1}A(\rho_i)\le 2\sum_{j=0}^{k}A(\rho_{i_j})\quad\text{ for any }k=0,1,2,\cdots.
\]
This concludes the proof.
\end{proof}
Now, assume that up to step $l$ we have shown
\[
\sup_{Q_{i_j}}u_k\le \mu_{i_j}\qquad j=1,\dots,l,
\]
where
\[
Q_{i_j}=K_{2\rho_{i_{j-1}}}(x_o)\times(t_o-\gm_*\bar\theta_{i_{j-1}}\rho_{i_{j-1}}^p,t_o]
\]
and
\[
\bar\theta_{i_{j-1}}=\left[\mu_{i_{j-1}}A(\rho_{i_{j-1}})\right]^{2-p},\ \ \mu_{i_j}=\mu_{i_{j-1}}\left[1-\frac1{\gm_2}A(\rho_{i_{j-1}})\right].
\]
Then by \eqref{Eq:bar-c} and Lemma~\ref{Lm:3:3} we have 
\[
\sup_{Q_{i_{l+1}}}u_{k}\le \mu_{i_{l+1}},
\]
where
\[
Q_{i_{l+1}}=K_{2\rho_{i_l}}(x_o)\times(t_o-\gm_*\bar\theta_{i_l}\rho_{i_l}^p,t_o]
\]
and
\[
\bar\theta_{i_l}=\left[\mu_{i_l}A(\rho_{i_l})\right]^{2-p},\ \ \mu_{i_{l+1}}=\mu_{i_l}\left[1-\frac1{\gm_2}A(\rho_{i_l})\right].
\]
Employing \eqref{Eq:sub-bd}, 
we can now conclude as in \cite[Section~6.4]{GLL}: 
there exists a constant $\gm_3>1$ that depends only on the data $\datap$, such that
\begin{align*}
\sup_{Q_{i_{l+1}}}u_{k}&\le \mu_{i_l} \left[1-\frac{1}{\gm_2}A(\rho_{i_l})\right]\\
&\le \mu_o\exp\left\{-\frac{1}{\gm_2}\sum_{j=0}^{l}A(\rho_{i_j})\right\}\\
&\le \mu_o\exp\left\{-\frac1{2\gm_2}\sum_{i=0}^{i_{l+1}-1}A(\rho_i)\right\}\\
&\le\mu_o\exp\left\{-\frac1{\gm_3}\int_{\rho_{i_{l+1}}}^{R_o}A(s)\frac{ds}s\right\};\\
\end{align*}
taking into consideration the reverse case of \eqref{Eq:in-req} actually yields that
\begin{equation}\label{Eq:final1}
\sup_{Q_{i_{l+1}}}\,(u-k)_+\le\mu_o\exp\left\{-\frac1{\gm_3}\int_{\rho_{i_{l+1}}}^{R_o}A(s)\frac{ds}s\right\}
+\gm_3 R_o^{\frac{\eps}{p-2}}.
\end{equation}
Now fix $\rho\in(0,R_o)$; there is an integer $l\ge0$ such that
\[
\rho_{i_{l+1}}\le\rho<\rho_{i_l}.
\]
As a result, it is easy to check that
\[
Q_{\rho}(\mu_o)=K_{2\rho}(x_o)\times [t_o-\mu_o^{2-p}\rho^p,t_o]\subset Q_{i_{l+1}}.
\]
Hence, we may conclude from \eqref{Eq:final1} that
\begin{equation}\label{Eq:final2}
\sup_{Q_{\rho}(\mu_o)}\,(u-k)_+\le\mu_o\exp\left\{-\frac1{\gm_3}\int_{\rho}^{R_o}A(s)\frac{ds}s\right\}+\gm_3 R_o^{\frac{\eps}{p-2}}.
\end{equation}
Similarly, if we set 
$$h=\inf_{Q_{R_o}}g,$$ 
and work with $u_{h}=(h-u)_+$,
an analogous argument as above gives that
\begin{equation}\label{Eq:final3}
\sup_{Q_{\rho}(\tilde{\mu}_o)}\,(h-u)_+\le\tilde{\mu}_o\exp\left\{-\frac1{\gm_3}\int_{\rho}^{R_o}A(s)\frac{ds}s\right\}+\gm_3 R_o^{\frac{\eps}{p-2}},
\end{equation}
where
\[\tilde{\mu}_o= \sup_{Q_{R_o}}u_h.\]
Note that 
\[\max\{\mu_o,\,\tilde{\mu}_o\}\le\mu_o+\tilde{\mu}_o\le\om_o-\osc_{Q_{R_o}}g\le\om_o.\]
Combining \eqref{Eq:final2} and \eqref{Eq:final3} yields 
\[
\osc_{Q_{\rho}(\om_o)\cap E_T}\,u\le\om_o\exp\left\{-\frac1{\gm_3}\int_{\rho}^{R_o}A(s)\frac{ds}s\right\}+\osc_{Q_{R_o}\cap S_T}g+2\gm_3 R_o^{\frac{\eps}{p-2}}.
\]
\hfill\os

\end{document}